\documentclass[11pt,a4paper]{article}

\usepackage[utf8]{inputenc}       
\usepackage[T1]{fontenc}          
\usepackage{microtype}            

\usepackage{amsmath, amssymb}     
\usepackage{amsthm}               
\usepackage{mathtools}            
\usepackage{bm}                   
\usepackage{comment}

\theoremstyle{plain}              
\newtheorem{theorem}{Theorem}[section]
\newtheorem{lemma}[theorem]{Lemma}

\newtheorem{proposition}[theorem]{Proposition}

\theoremstyle{definition}         
\newtheorem{definition}[theorem]{Definition}

\usepackage{graphicx}             
\usepackage{float}                
\usepackage[font=small,labelfont=bf]{caption} 
\usepackage{subcaption}           

\usepackage{algorithm}            
\usepackage{algpseudocode}        

\usepackage[margin=2.5cm]{geometry}    
\usepackage{parskip}
\usepackage{xcolor} 
\usepackage[
colorlinks=true,
linkcolor=blue,
citecolor=green,
urlcolor=red
]{hyperref}
\usepackage[nameinlink]{cleveref} 

\crefname{equation}{equation}{equations}
\crefname{figure}{figure}{figures}
\crefname{table}{table}{tables}
\crefname{algorithm}{algorithm}{algorithms}

\title{\textbf{Element-wise Convergence Behavior of Subspace Iteration}}
\author{Mingyang Zhao\\
myzhao24@m.fudan.edu.cn
}
\date{\today}

\begin{document}

\maketitle


\begin{abstract}
This paper studies the element-wise convergence behavior of subspace iteration. All results are established for both \(\mathbb{F}=\mathbb{R}\) and \(\mathbb{F}=\mathbb{C}\). For a diagonal matrix \(\Lambda=\mathrm{diag}(\lambda_1,\ldots,\lambda_n)\) with \(|\lambda_1|>\cdots>|\lambda_n|>0\), we analyze the modulus of each entry of the iterative matrix sequence generated by subspace iteration, both without and with the Rayleigh--Ritz procedure. Under mild assumptions on the initial matrix \(X\in\mathbb{F}^{n\times m}\), we first derive exact asymptotic expressions for \(|Q_k(i,j)|\) in the subspace iteration without the Rayleigh--Ritz procedure: entries with \(i\neq j\) decay as \((|\lambda_{\max\{i,j\}}|/|\lambda_{\min\{i,j\}}|)^k\), and the deviation of \(|Q_k(j,j)|\) from \(1\) decays as \(\max\{|\lambda_j|/|\lambda_{j-1}|,|\lambda_{j+1}|/|\lambda_j|\}^{2k}\), with explicit coefficients determined by the LU factorization of \(X\). For the Rayleigh--Ritz variant, we obtain element-wise bounds for the normalized Ritz vectors \(Z_k\). Specifically, for \(|Z_k(i,j)|\), off-diagonal entries with \(i\geq m+1\) decay as \((|\lambda_i|/|\lambda_j|)^k\), off-diagonal entries with \(i\leq m\) decay as \((|\lambda_{m+1}|^2/(|\lambda_i||\lambda_j|))^k\), and the deviation of \(|Z_k(j,j)|\) from \(1\) decays as \((|\lambda_{m+1}|/|\lambda_j|)^{2k}\). These results give an element-wise description of the convergence behavior of subspace iteration. After an orthogonal or unitary change of basis, the results apply to real symmetric or complex normal matrices.
\end{abstract}

\textbf{Keywords:} subspace iteration, Rayleigh--Ritz procedure.

\section{Introduction}

Subspace iteration is a fundamental iterative method for computing a dominant invariant subspace of a large square matrix. It can be viewed as a block generalization of the power method: instead of iterating a single vector, one propagates an \(n\times m\) block and orthonormalizes its columns at each step. Because of its simple structure and its close relation to the QR algorithm and Krylov subspace methods, it remains a standard tool for large-scale eigenvalue problems and has been widely used in scientific computing and data analysis; see, e.g., \cite{golub_book,parlett_book,saad_book}.

The pseudocode of subspace iteration without the Rayleigh--Ritz procedure is as follows \cite{saad_book}.

\begin{algorithm}[H]
\caption{Subspace iteration without the Rayleigh--Ritz procedure}
\begin{algorithmic}[1]
\Require \(A\in\mathbb{C}^{n\times n}\), \(X\in\mathbb{C}^{n\times m}\)
\State \(Q_0R_0=X\)
\For{\(k=1,2,\dots\)}
\State \(Q_kR_k=AQ_{k-1}\)
\EndFor
\end{algorithmic}
\end{algorithm}

For simplicity, assume that \(A=\mathrm{diag}(\lambda_1,\dots,\lambda_n)\) with \(|\lambda_1|>\dots>|\lambda_n|>0\). Under mild assumptions on \(X\), we have \(|Q_k|\to\begin{bmatrix}
	I_m\\
	O
\end{bmatrix}\), that is,
\[
|Q_k(i,j)|\to\delta_{i,j},\quad (i,j)\in[n]\times[m].
\]
However, what is the convergence behavior of the modulus of each entry \(|Q_k(i,j)|\)?

The pseudocode of subspace iteration with the Rayleigh--Ritz procedure is as follows \cite{saad_book}.

\begin{algorithm}[H]
\caption{Subspace iteration with the Rayleigh--Ritz procedure}
\begin{algorithmic}[1]
\Require \(A\in\mathbb{C}^{n\times n}\), \(X\in\mathbb{C}^{n\times m}\)
\State \(Q_0R_0=X\)
\State \(Q_0^*AQ_0=U_0\Theta_0U_0^{-1}\)
\State \(Z_0=Q_0U_0\)
\For{\(k=1,2,\dots\)}
\State \(Q_kR_k=AZ_{k-1}\)
\State \(Q_k^*AQ_k=U_k\Theta_kU_k^{-1}\)
\State \(Z_k=Q_kU_k\)
\EndFor
\end{algorithmic}
\end{algorithm}

For simplicity, assume that \(A=\mathrm{diag}(\lambda_1,\dots,\lambda_n)\) with \(|\lambda_1|>\dots>|\lambda_n|>0\) Under mild assumptions on \(X\), we have \(|Z_k|\to\begin{bmatrix}
	I_m\\
	O
\end{bmatrix}\), that is,
\[
|Z_k(i,j)|\to\delta_{i,j},\quad (i,j)\in[n]\times[m].
\]
However, what is the convergence behavior of the modulus of each entry \(|Z_k(i,j)|\)?

Classical convergence theory for subspace iteration is usually formulated either at the subspace level or at the level of individual eigenvectors and Ritz vectors. It is worth emphasizing that these classical results are not element-wise: they do not describe the decay or the deviation of individual entries of the computed basis matrices \(Q_k\) or \(Z_k\).

At the subspace level, Theorem~7.3.1 in \cite{golub_book} analyzes orthogonal iteration, which is equivalent to subspace iteration in the absence of the Rayleigh--Ritz procedure. It shows that the distance between the computed subspace \(\operatorname{ran}(Q_k)\) and the dominant invariant subspace \(D_r(A)\) is bounded by a constant times \((|\lambda_{r+1}|/|\lambda_r|)^k\). The constant depends on the separation of the dominant eigenvalues and on the departure from normality of \(A\). This is a subspace-level convergence result: it measures the convergence of the whole computed subspace, not the convergence of its individual entries.

At the vector level, Theorem~14.4.1 in \cite{parlett_book} gives a convergence estimate for each target eigenvector \(z_i\) in the symmetric case. More precisely, it shows that a certain vector \(x_i^{(k)}\) constructed from the iterated subspace approximates \(z_i\) linearly, with a rate governed by ratios of the relevant eigenvalues. Theorem~14.4.2 in \cite{parlett_book} further shows that the Ritz vector \(y_i^{(k)}\) is asymptotically equivalent to \(x_i^{(k)}\), and therefore the Ritz vector itself converges to the corresponding eigenvector at the same vector-level rate. These are vector-level results for individual eigenvectors and Ritz vectors.

Also at the vector level, Theorem~5.2 in \cite{saad_book} considers subspace iteration with projection. For each wanted eigenvector \(u_i\), it estimates the distance from \(u_i\) to the iterated subspace \(S_k\). The bound decays like \((|\lambda_{m+1}|/|\lambda_i|)^k\), up to a factor that tends to zero as \(k\to\infty\). This is again a per-eigenvector estimate: it describes how fast each wanted eigenvector is captured by the iterated subspace, rather than how the subspace as a whole converges or how individual entries of the basis behave.

In summary, the existing theory provides either subspace-level bounds, as in Theorem~7.3.1 of \cite{golub_book}, or vector-level bounds, as in Theorems~14.4.1 and~14.4.2 of \cite{parlett_book} and Theorem~5.2 of \cite{saad_book}. None of these results gives an element-wise description of the convergence of subspace iteration. The purpose of this paper is to fill this gap by analyzing the modulus of each entry of the iterative matrices, both without and with the Rayleigh--Ritz procedure.

The main contributions of this paper are as follows.
\begin{itemize}
\item For subspace iteration without the Rayleigh--Ritz procedure, we derive exact asymptotic expressions for the modulus of every entry of the orthonormal basis \(Q_k\). For \(i\ne j\), the entry \(|Q_k(i,j)|\) decays as
\[
\left(\frac{|\lambda_{\max\{i,j\}}|}{|\lambda_{\min\{i,j\}}|}\right)^k,
\]
while the deviation of \(|Q_k(j,j)|\) from \(1\) decays as
\[
\max\left\{\frac{|\lambda_j|}{|\lambda_{j-1}|},
\frac{|\lambda_{j+1}|}{|\lambda_j|}\right\}^{2k}.
\]
The coefficients in these asymptotic expressions are explicitly determined by the LU factorization of the initial matrix \(X\).
\item For subspace iteration with the Rayleigh--Ritz procedure, we establish element-wise upper and lower bounds for the normalized Ritz vectors \(Z_k\). In particular, off-diagonal entries with \(i\ge m+1\) decay as
\[
\left(\frac{|\lambda_i|}{|\lambda_j|}\right)^k,
\]
off-diagonal entries with \(i\le m\) decay as
\[
\left(\frac{|\lambda_{m+1}|^2}{|\lambda_i|\,|\lambda_j|}\right)^k,
\]
and the deviation of \(|Z_k(j,j)|\) from \(1\) decays as
\[
\left(\frac{|\lambda_{m+1}|}{|\lambda_j|}\right)^{2k}.
\]
\item All results are proved for both \(\mathbb{F}=\mathbb{R}\) and \(\mathbb{F}=\mathbb{C}\). After an orthogonal or unitary change of basis, they apply to real symmetric or complex normal matrices.
\end{itemize}

The proofs use two different technical routes. In the absence of the Rayleigh--Ritz procedure, we exploit the LU factorization of the initial block and perform a Gram--Schmidt orthogonalization of the columns of the scaled lower triangular factor; this yields an inductive element-wise description of \(Q_k\). For the Rayleigh--Ritz variant, the analysis is more delicate. We reformulate the Ritz pair equations as a fixed point problem in a suitably weighted metric space and then apply the Banach fixed point theorem to obtain quantitative bounds for the Ritz values and the coefficients of the Ritz vectors.

Although the entrywise convergence of subspace iteration has also been studied in \cite{NEURIPS2020_3d8e03e8}, our work is fundamentally different. The analysis in \cite{NEURIPS2020_3d8e03e8} measures the error in the \(\ell_{2\to\infty}\) norm and provides only upper bounds, whereas we establish element-wise asymptotic results: exact expressions with explicit coefficients for the case without the Rayleigh-Ritz procedure, and element-wise upper and lower bounds for the Rayleigh-Ritz variant. This difference in both the convergence metric and the precision of the results makes the two analyses essentially distinct.

The rest of this paper is organized as follows. Section~\ref{section:withoutRR} is devoted to subspace iteration without the Rayleigh--Ritz procedure. We first recall the LU factorization of the initial block and the associated scaled lower triangular matrices, and then prove the exact asymptotic element-wise formulas by induction. Section~\ref{section:withRR} treats the Rayleigh--Ritz variant. After introducing the necessary notation and auxiliary lemmas, we construct a complete metric space and a contraction map, apply the Banach fixed point theorem, and finally translate the fixed point estimates back to the normalized Ritz vectors. Section~\ref{section:conclusion} concludes the paper and discusses a possible extension to unitarily upper triangularizable matrices.

\section{Subspace iteration without the Rayleigh--Ritz procedure}\label{section:withoutRR}

\begin{lemma}\label{lemma:withoutRR}
Suppose \(\mathbb{F}=\mathbb{R}\) or \(\mathbb{C}\), \(n\), \(m\in\mathbb{N}_+\) with \(m\leq n\), and \(\Lambda=\mathrm{diag}(\lambda_1,\dots,\lambda_n)\in\mathbb{F}^{n\times n}\), with \(|\lambda_1|>\dots>|\lambda_n|>0\). Assume \(X\in\mathbb{F}^{n\times m}\) satisfies \(\det(X(1:j,1:j))\neq 0\) for every \(j\in[m]\). Then there exists a unique unit lower triangular \(L\in\mathbb{F}^{n\times m}\) and upper triangular \(U\in\mathbb{F}^{m\times m}\) such that \(X=LU\). Assume further that all strictly lower triangular entries of \(L\) and of \(L(1:m,:)^{-1}\) are nonzero.

Let \(Q_kR_k=\Lambda^k X\) be a QR factorization of \(\Lambda^kX\). Then for every \(j\in[m]\), we have
\[
|Q_k(i,j)|=\begin{cases}
|(L(1:m,:)^{-1})(j,i)|\dfrac{|\lambda_j|^k}{|\lambda_i|^k}(1+o(1)),\quad&i\in[j-1]\\
1+o(1),\quad&i=j\\
|L(i,j)|\dfrac{|\lambda_i|^k}{|\lambda_j|^k}(1+o(1)),\quad&i\in[n]\setminus[j]
\end{cases}.
\]
\end{lemma}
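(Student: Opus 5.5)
Write $\Lambda^kX=\Lambda^kLU=(\Lambda^kL\Lambda_m^{-k})(\Lambda_m^kU)$, where $\Lambda_m=\mathrm{diag}(\lambda_1,\dots,\lambda_m)$. Set $L_k:=\Lambda^kL\Lambda_m^{-k}$. Then $L_k$ is unit lower triangular with entries
\[
L_k(i,j)=L(i,j)\,\frac{\lambda_i^k}{\lambda_j^k}\qquad (i\ge j),
\]
and these tend to $0$ for $i>j$. Since $\Lambda_m^kU$ is upper triangular and invertible, $\operatorname{ran}(\Lambda^kX)=\operatorname{ran}(L_k)$, and column-nested spans agree. So up to unimodular column scalings, which do not affect moduli, $Q_k$ is the Gram--Schmidt orthonormalization of the columns of $L_k$. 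I will therefore compute $q_j$, the normalized component of $\ell_j:=L_k(:,j)$ orthogonal to $\operatorname{span}\{\ell_1,\dots,\ell_{j-1}\}$.

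I will avoid an induction and use a direct formula. The orthogonal projection of $\ell_j$ removes the components along $\ell_1,\dots,\ell_{j-1}$, and to leading order these are determined by the top $j\times j$ block. Let $T_k=L_k(1:m,:)$, which is unit lower triangular with $T_k=\Lambda_m^kL(1:m,:)\Lambda_m^{-k}$, so
\[
T_k^{-1}=\Lambda_m^k\,L(1:m,:)^{-1}\,\Lambda_m^{-k}.
\]
The key observation is that $q_j$ is orthogonal to $\ell_1,\dots,\ell_{j-1}$, and these columns are $e_1,\dots,e_{j-1}$ plus small perturbations. Hence the leading-order $q_j$ has the following structure:
\begin{itemize}
\item its entries $i>j$ come from $\ell_j$ itself, namely $L(i,j)(\lambda_i/\lambda_j)^k$;
\item its entries $i<j$ are forced by orthogonality. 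Solving $q_j^*\ell_p=0$ for $p<j$ to first order gives $\overline{q_j(p)}\approx-\sum_{i>p}\overline{q_j(i)}\,\ell_p(i)$, which is a triangular back-substitution.
\end{itemize}
I expect this back-substitution to reproduce the $j$-th row of $T_k^{-1}$ (conjugated), i.e.\ $(L(1:m,:)^{-1})(j,i)\,(\lambda_j/\lambda_i)^k$ in modulus, for $i<j$.

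To make this rigorous I will use the clean identity that the $j$-th column of $Q$ is proportional to the $j$-th column of $L_k(L_k^*L_k)^{-1}$ projected appropriately. More concretely, I will use the dual-basis characterization: $q_j\perp\ell_1,\dots,\ell_{j-1}$ and $q_j\in\operatorname{span}(\ell_1,\dots,\ell_j)$. I will posit an explicit approximate vector $v_j$ with
\begin{itemize}
\item $v_j(i)=\overline{(T_k^{-1})(j,i)}$ for $i<j$,
\item $v_j(j)=1$,
\item $v_j(i)=L_k(i,j)$ for $i>j$,
\end{itemize}
and then show that the exact $q_j$ equals $v_j$ with each entry multiplied by $(1+o(1))$, up to normalization and a phase. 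The error analysis bounds second-order terms, which are products of at least two small ratios. Each such term must be $o$ of the corresponding leading term. This is where the hypothesis that all strictly lower entries of $L$ and of $L(1:m,:)^{-1}$ are nonzero is used: it guarantees that the leading terms are not themselves zero, so relative errors make sense. For example, a term like $(\lambda_i/\lambda_p)^k(\lambda_p/\lambda_j)^k$ with $p$ between the indices matches $(\lambda_i/\lambda_j)^k$ in rate, and must be shown to be absorbed exactly into the leading coefficient rather than treated as an error. That absorption is precisely why the inverse $L(1:m,:)^{-1}$, not just $L$, appears.

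The main obstacle is this bookkeeping: showing that for $i<j$ all same-rate contributions assemble exactly into $(L(1:m,:)^{-1})(j,i)$, with genuinely smaller remainders, and likewise that for $i>j$ the corrections from subtracting projections onto $\ell_p$ ($p<j$) are strictly smaller than $|\lambda_i/\lambda_j|^k$. They are, because such corrections carry factors $(\lambda_i/\lambda_p)^k\cdot(\text{small})$ with $|\lambda_p|>|\lambda_j|$. The diagonal statement then follows from normalization, since $\|v_j\|^2=1+o(1)$. I would handle the $i<j$ bookkeeping by writing $q_j\propto L_k\,c$ with $c$ solving the $j\times j$ Gram-type system, and expanding $c$ via a Neumann series around $T_k^{-1}$.
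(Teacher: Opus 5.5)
Your reduction to Gram--Schmidt on the columns of $L_k$ matches the paper's. The non-inductive dual-basis route you then choose is sound in principle. With $T:=L_k(1:j,1:j)$ and $B:=L_k(j+1:n,1:j)$ one has exactly
\[
q_j\ \propto\ L_k(:,1:j)\,(T^*T+B^*B)^{-1}e_j=\begin{bmatrix} I_j\\ BT^{-1}\end{bmatrix}(I_j+K)^{-1}\,T^{-*}e_j,\qquad K:=(BT^{-1})^*(BT^{-1}).
\]
Here $T^{-*}e_j$ is exactly your $v_j(1:j)$, so the coefficients $(L(1:m,:)^{-1})(j,i)$ come out automatically. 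Only the first $j$ columns may be used. With all of $L_k$, as your ``projected appropriately'' sentence suggests, you would get a vector also orthogonal to $\ell_{j+1},\dots,\ell_m$.

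The gap is that the whole analytic content of the lemma lies in the step you defer, and the Neumann series you name does not do it as stated. The entries of $T^{-*}e_j$ have different exponential sizes $|\lambda_j/\lambda_i|^k$, and you need \emph{relative} accuracy in each. A Neumann expansion controlled through $\|K\|=O(|\lambda_{j+1}/\lambda_j|^{2k})$ only gives an absolute error of that size. That is not $o(|\lambda_j/\lambda_i|^k)$ whenever $|\lambda_j/\lambda_i|<|\lambda_{j+1}/\lambda_j|^2$; for example, take moduli $100,10,1,0.9$ with $j=3$, $i=1$. Likewise, ``each second-order term is a product of two small ratios'' does not control the infinitely many terms of the series entrywise.

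What is missing is a graded bound combined with a diagonal scaling.
\begin{enumerate}
\item You have $(BT^{-1})(i,p)=(\lambda_i/\lambda_p)^k\sum_{r=p}^{j}L(i,r)\,(L(1:m,:)^{-1})(r,p)$, where the sums do not depend on $k$. Hence $|K(p,q)|\le C|\lambda_{j+1}|^{2k}/(|\lambda_p||\lambda_q|)^k$.
\item Set $D:=\mathrm{diag}(|\lambda_j/\lambda_p|^k)_{p\le j}$. Then $|(D^{-1}KD)(p,q)|\le C|\lambda_{j+1}/\lambda_q|^{2k}\to 0$.
\item Write $(I_j+K)^{-1}T^{-*}e_j=D\,(I_j+D^{-1}KD)^{-1}D^{-1}T^{-*}e_j$. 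The entries of $D^{-1}T^{-*}e_j$ have fixed nonzero moduli $|(L(1:m,:)^{-1})(j,p)|$. This gives the rows $i\le j$ with factor $1+o(1)$.
\item The rows $i>j$ follow by applying $BT^{-1}$. The $p=j$ term equals $L_k(i,j)(1+o(1))$, and the other terms are smaller by factors $|\lambda_j/\lambda_p|^{2k}$.
\end{enumerate}

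With this repair your proof is complete and genuinely different from the paper's. The paper inducts on $j$, so every quantity is a finite sum whose term sizes come from the inductive hypothesis. It then has to invoke $L(1:m,:)L(1:m,:)^{-1}=I$ to recognize the same-rate sum as $(L(1:m,:)^{-1})(j,i)$. Your route gets that coefficient for free from $T_k^{-1}=\Lambda_m^kL(1:m,:)^{-1}\Lambda_m^{-k}$. The price is a weighted perturbation estimate, which is the same device the paper uses in its Rayleigh--Ritz analysis (the weights $\rho_{p,q}^k$ and the metric $d$).
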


\begin{theorem}
Suppose \(\mathbb{F}=\mathbb{R}\) or \(\mathbb{C}\), \(n\), \(m\in\mathbb{N}_+\) with \(m\leq n\) and \(\Lambda=\mathrm{diag}(\lambda_1,\dots,\lambda_n)\in\mathbb{F}^{n\times n}\), with \(|\lambda_1|>\dots>|\lambda_n|>0\). Suppose \(V\in\mathbb{F}^{n\times n}\) satisfies \(VV^*=V^*V=I_n\) and define \(A:=V\Lambda V^*\). Suppose \(Y\in\mathbb{F}^{n\times m}\) and define \(X:=V^*Y\). Assume \(\det(X(1:j,1:j))\neq 0\) for every \(j\in[m]\). Then there exists a unique unit lower triangular \(L\in\mathbb{F}^{n\times m}\) and upper triangular \(U\in\mathbb{F}^{m\times m}\) such that \(X=LU\). Suppose further that all strictly lower triangular entries of \(L\) and of \(L(1:m,:)^{-1}\) are nonzero.

Let \(\tilde{Q}_k\tilde{R}_k=A^kY\) be a QR factorization of \(A^kY\). Then for every \(j\in[m]\), we have
\[
|V(:,i)^*\tilde{Q}_k(:,j)|=\begin{cases}
|(L(1:m,:)^{-1})(j,i)|\dfrac{|\lambda_j|^k}{|\lambda_i|^k}(1+o(1)),\quad&i\in[j-1]\\
1+o(1),\quad&i=j\\
|L(i,j)|\dfrac{|\lambda_i|^k}{|\lambda_j|^k}(1+o(1)),\quad&i\in[n]\setminus[j]
\end{cases}.
\]

In particular, we have
\[
|V(:,i)^*\tilde{Q}_k(:,j)|=\Theta\left(\dfrac{|\lambda_{\max\{i,j\}}|^k}{|\lambda_{\min\{i,j\}}|^k}\right),\quad i\neq j;
\]
\[
1-|V(:,j)^*\tilde{Q}_k(:,j)|=\Theta\left(\max\left\{\dfrac{|\lambda_j|}{|\lambda_{j-1}|},\dfrac{|\lambda_{j+1}|}{|\lambda_j|}\right\}^{2k}\right).
\]
\end{theorem}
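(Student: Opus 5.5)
The plan is to reduce to Lemma~\ref{lemma:withoutRR} by a unitary change of basis, and then read the two ``in particular'' estimates off the resulting asymptotics.

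\textbf{Reduction.} Since \(A^k=V\Lambda^kV^*\), we have \(V^*A^kY=\Lambda^kV^*Y=\Lambda^kX\). Hence \(V^*\tilde Q_k\tilde R_k=\Lambda^kX\). Because \(V^*\tilde Q_k\) has orthonormal columns and \(\tilde R_k\) is upper triangular, this is a QR factorization of \(\Lambda^kX\). The hypotheses on \(X\) are exactly those of Lemma~\ref{lemma:withoutRR}. The lemma holds for any QR factorization \(Q_kR_k=\Lambda^kX\), since a full-column-rank matrix has a QR factor unique up to right multiplication by a unimodular diagonal matrix, and that does not change entry moduli. So we may take \(Q_k:=V^*\tilde Q_k\). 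Then \(V(:,i)^*\tilde Q_k(:,j)=Q_k(i,j)\), and the three-case formula follows verbatim.

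\textbf{Off-diagonal \(\Theta\).} For \(i<j\le m\), the coefficient \(|(L(1:m,:)^{-1})(j,i)|\) is a nonzero constant by hypothesis. For \(i>j\), \(|L(i,j)|\) is nonzero by hypothesis. The factor \((1+o(1))\) is eventually in \([1/2,2]\). This gives \(\Theta\bigl((|\lambda_{\max\{i,j\}}|/|\lambda_{\min\{i,j\}}|)^k\bigr)\).

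\textbf{Diagonal deviation.} The first-order statement \(|Q_k(j,j)|=1+o(1)\) is too weak, so I would use column normalization instead:
\[
|Q_k(j,j)|^2=1-\sum_{i\ne j}|Q_k(i,j)|^2 .
\]
Writing \(s_k=\sum_{i\neq j}|Q_k(i,j)|^2\), we get \(1-|Q_k(j,j)|=1-\sqrt{1-s_k}=s_k/(1+\sqrt{1-s_k})\sim s_k/2\), since \(s_k\to0\).

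By the off-diagonal asymptotics, each term of \(s_k\) is \(\Theta\bigl((|\lambda_{\max}|/|\lambda_{\min}|)^{2k}\bigr)\), and all terms are positive. The sum is therefore \(\Theta\) of its largest term. For \(i<j\) the ratio \(|\lambda_j|/|\lambda_i|\) is maximized at \(i=j-1\). For \(i>j\) the ratio \(|\lambda_i|/|\lambda_j|\) is maximized at \(i=j+1\). The strict ordering \(|\lambda_1|>\dots>|\lambda_n|\) makes the other terms of strictly smaller geometric rate. This gives \(s_k=\Theta\bigl(\max\{|\lambda_j|/|\lambda_{j-1}|,|\lambda_{j+1}|/|\lambda_j|\}^{2k}\bigr)\).

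Boundary conventions need care. For \(j=1\) the first ratio is absent. The second ratio is absent only when \(j=n\), which forces \(m=n\), and then the \(i>j\) terms are empty. Both cases are handled by reading the max over existing terms, and I will note this explicitly.

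\textbf{Main obstacle.} The only subtle point is justifying that the lemma applies to an arbitrary QR factorization, that is, the uniqueness of \(Q_k\) up to column phases. This uses that \(\Lambda^kX\) has full column rank, which follows from \(\Lambda\) being invertible and \(X\) having rank \(m\), the latter guaranteed by \(\det X(1:m,1:m)\neq0\). The rest is bookkeeping.
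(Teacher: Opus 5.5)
Your proposal is correct and follows the paper's route: the identity \(V^*\tilde Q_k\tilde R_k=\Lambda^kX\) reduces the statement to Lemma~\ref{lemma:withoutRR}, and the uniqueness of the QR factor up to column phases takes care of the choice of factorization. Your normalization step \(1-|Q_k(j,j)|=s_k/(1+\sqrt{1-s_k})\) also supplies something the paper leaves implicit: the lemma's \(1+o(1)\) for the diagonal entry does not by itself give the \(\Theta\) rate of the deviation.
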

\begin{proof}
This follows by applying a change of orthonormal basis to the conclusion of the preceding lemma.
\end{proof}

We now prove Lemma~\ref{lemma:withoutRR}.

\subsection{LU factorization of \(X\)}

Since \(\det(X(1:j,1:j))\neq 0\) for all \(j\in[m]\), there exists a unique unit lower triangular \(L\in\mathbb{F}^{n\times m}\) and upper triangular \(U\in\mathbb{F}^{m\times m}\) such that \(X=LU\). Let \(\Lambda_1=\Lambda(1:m,1:m)\). Set \(L_k=\Lambda^kL\Lambda_1^{-k}\). Then \(L_k\in\mathbb{F}^{n\times m}\) remains unit lower triangular, and its strictly lower triangular entries are:
\[
L_k(i,j)=L(i,j)\left(\dfrac{\lambda_i}{\lambda_j}\right)^k,\quad\forall i>j.
\]

\subsection{Gram--Schmidt orthogonalization of the columns of \(L_k\)}

For \(j=1,2,\dots,m\), define \(V_k(:,j)\) and \(Q_k(:,j)\) inductively by
\[
V_k(:,j)=L_k(:,j)-\sum_{l=1}^{j-1}\langle L_k(:,j),Q_k(:,l)\rangle Q_k(:,l),\quad Q_k(:,j)=\dfrac{V_k(:,j)}{\|V_k(:,j)\|}.
\]

Claim: For every \(j\in[m]\), we have
\[
Q_k(i,j)=\begin{cases}
\overline{(L(1:m,:)^{-1})(j,i)}\overline{\left(\dfrac{\lambda_j}{\lambda_i}\right)^k}(1+o(1)),\quad&i\in[j-1]\\
1+o(1),\quad&i=j\\
L(i,j)\left(\dfrac{\lambda_i}{\lambda_j}\right)^k(1+o(1)),\quad&i\in[n]\setminus[j]
\end{cases}.
\]

We prove the claim by induction.

\subsection{Base case}

Since \(\|V_k(:,1)\|_2=\|L_k(:,1)\|_2=1+o(1)\), for every \(i\in[n]\) we have
\[
Q_k(i,1)=V_k(i,1)/\|V_k(:,1)\|=L_k(i,1)(1+o(1))=L(i,1)\left(\dfrac{\lambda_i}{\lambda_1}\right)^k(1+o(1)).
\]

\subsection{Inductive step}

Assume the claim holds for \(1\), \(2\), \(\dots\), \(j-1\). We prove it for \(j\).

For \(l\in[j-1]\), define \(c_{l,j}=\langle L_k(:,j), Q_k(:,l)\rangle\). Then
\begin{align*}
c_{l,j}&=\sum_{i=j}^{n}L_k(i,j)\overline{Q_k(i,l)}\\
&=\sum_{i=j}^{n}L(i,j)\left(\dfrac{\lambda_i}{\lambda_j}\right)^k\overline{L(i,l)\left(\dfrac{\lambda_i}{\lambda_l}\right)^k}(1+o(1))\\
&=\overline{L(j,l)\left(\dfrac{\lambda_j}{\lambda_l}\right)^k}(1+o(1)).
\end{align*}

\subsubsection{Computation of \(V_k(i,j)\) (\(i>j\))}

\begin{align*}
V_k(i,j)&=L_k(i,j)-\sum_{l=1}^{j-1}c_{l,j}Q_k(i,l)\\
&=L(i,j)\left(\dfrac{\lambda_i}{\lambda_j}\right)^k-\sum_{l=1}^{j-1}\overline{L(j,l)\left(\dfrac{\lambda_j}{\lambda_l}\right)^k}(1+o(1))L(i,l)\left(\dfrac{\lambda_i}{\lambda_l}\right)^k(1+o(1))\\
&=L(i,j)\left(\dfrac{\lambda_i}{\lambda_j}\right)^k(1+o(1)).
\end{align*}

\subsubsection{Computation of \(V_k(j,j)\)}

\begin{align*}
V_k(j,j)&=L_k(j,j)-\sum_{l=1}^{j-1}c_{l,j}Q_k(j,l)\\
&=1-\sum_{l=1}^{j-1}\overline{L(j,l)\left(\dfrac{\lambda_j}{\lambda_l}\right)^k}(1+o(1))L(j,l)\left(\dfrac{\lambda_j}{\lambda_l}\right)^k(1+o(1))\\
&=1+o(1).
\end{align*}

\subsubsection{Computation of \(V_k(i,j)\) (\(i<j\))}

\begin{align*}
V_k(i,j)&=L_k(i,j)-\sum_{l=1}^{j-1}c_{l,j}Q_k(i,l)\\
&=-\sum_{l=1}^{i-1}c_{l,j}Q_k(i,l)-\sum_{l=i}c_{l,j}Q_k(i,l)-\sum_{l=i+1}^{j-1}c_{l,j}Q_k(i,l)\\
&=-\sum_{l=1}^{i-1}\overline{L(j,l)\left(\dfrac{\lambda_j}{\lambda_l}\right)^k}(1+o(1))L(i,l)\left(\dfrac{\lambda_i}{\lambda_l}\right)^k(1+o(1))\\
&\quad-\overline{L(j,i)\left(\dfrac{\lambda_j}{\lambda_i}\right)^k}(1+o(1))(1+o(1))\\
&\quad-\sum_{l=i+1}^{j-1}\overline{L(j,l)\left(\dfrac{\lambda_j}{\lambda_l}\right)^k}(1+o(1))\overline{(L(1:m,:)^{-1})(l,i)\left(\dfrac{\lambda_l}{\lambda_i}\right)^k}(1+o(1))\\
&=\left(-\overline{L(j,i)}-\sum_{l=i+1}^{j-1}\overline{L(j,l)(L(1:m,:)^{-1})(l,i)}\right)\overline{\left(\dfrac{\lambda_j}{\lambda_i}\right)^k}(1+o(1)).
\end{align*}

Since \(L(1:m,:)\) and \(L(1:m,:)^{-1}\) are both unit lower triangular matrices, we have
\begin{align*}
0&=(L(1:m,:)L(1:m,:)^{-1})(j,i)=\sum_{l=i}^{j}L(j,l)L(1:m,:)^{-1}(l,i)\\
&=L(j,i)+\sum_{l=i+1}^{j-1}L(j,l)L(1:m,:)^{-1}(l,i)+L(1:m,:)^{-1}(j,i),
\end{align*}
so
\[
\overline{L(1:m,:)^{-1}(j,i)}=-\overline{L(j,i)}-\sum_{l=i+1}^{j-1}\overline{L(j,l)L(1:m,:)^{-1}(l,i)}.
\]
Hence
\[
V_k(i,j)=\overline{L(1:m,:)^{-1}(j,i)}\overline{\left(\dfrac{\lambda_j}{\lambda_i}\right)^k}(1+o(1)).
\]

\subsubsection{Normalization in the 2-norm}

Since \(\|V_k(:,j)\|_2=1+o(1)\), we have \(Q_k(:,j)=V_k(:,j)(1+o(1))\), hence the claim holds for \(j\).

\subsection{Completing the proof}

By the base case and the inductive step, the claim holds.

Since
\[
\Lambda^kX=\Lambda^kL\Lambda_1^{-k}\Lambda_1^kU=Q_k(\hat{R}_k\Lambda_1^kU)
\]
for some upper triangular \(\hat{R}_k\), the \(Q\)-factor in the QR factorization of \(\Lambda^kX\) is \(Q_k\). The claim gives
\[
|Q_k(i,j)|=\begin{cases}
|(L(1:m,:)^{-1})(j,i)|\dfrac{|\lambda_j|^k}{|\lambda_i|^k}(1+o(1)),\quad&i\in[j-1]\\
1+o(1),\quad&i=j\\
|L(i,j)|\dfrac{|\lambda_i|^k}{|\lambda_j|^k}(1+o(1)),\quad&i\in[n]\setminus[j]
\end{cases}.
\]
By the uniqueness of the QR factorization, different choices of the QR factorization do not affect \(|Q_k(i,j)|\), which completes the proof of Lemma~\ref{lemma:withoutRR}.

\section{Subspace iteration with the Rayleigh--Ritz procedure}\label{section:withRR}

\begin{lemma}\label{lemma:withRR}
Suppose \(\mathbb{F}=\mathbb{R}\) or \(\mathbb{C}\), \(n\), \(m\in\mathbb{N}_+\) with \(n>m\), and \(\Lambda=\mathrm{diag}(\lambda_1,\dots,\lambda_n)\in\mathbb{F}^{n\times n}\), with \(|\lambda_1|>\dots>|\lambda_n|>0\). Assume \(X\in\mathbb{F}^{n\times m}\) is such that \(X(1:m,:)\) is invertible and all entries of \(X(m+1:n,:)X(1:m,:)^{-1}\) are nonzero. 

Then there exists \(M_6=M_6(n,m,\Lambda,X)>0\), \(M_7=M_7(n,m,\Lambda,X)>0\), \(k_7=k_7(n,m,\Lambda,X)\in\mathbb{N}_+\) such that for every \(k\geq k_7\),

\begin{itemize}
\item The \(m\) Ritz values of \(\Lambda\) projected on \(\mathrm{span}(\Lambda^kX)\) have distinct modulus; denote them by \(\{\theta_j\}_{j\in[m]}\) where \(|\theta_1|>\dots>|\theta_m|\).
\item Let \(Z_k(:,j)\) be a unit Ritz vector corresponding to \(\theta_j\). Then for every \(j\in[m]\), we have
\begin{align*}
M_7\dfrac{|\lambda_{m+1}|^{2k}}{|\lambda_i|^k|\lambda_j|^k}&\leq|Z_k(i,j)|\leq M_6\dfrac{|\lambda_{m+1}|^{2k}}{|\lambda_i|^k|\lambda_j|^k},\quad i\in[m]\setminus\{j\};\\
M_7\dfrac{|\lambda_i|^k}{|\lambda_j|^k}&\leq|Z_k(i,j)|\leq M_6\dfrac{|\lambda_i|^k}{|\lambda_j|^k},\quad i\in[n]\setminus[m];\\
M_7\dfrac{|\lambda_{m+1}|^{2k}}{|\lambda_j|^{2k}}&\leq1-|Z_k(j,j)|\leq M_6\dfrac{|\lambda_{m+1}|^{2k}}{|\lambda_j|^{2k}}.
\end{align*}
\end{itemize}
\end{lemma}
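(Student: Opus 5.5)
Write \(\Lambda_1=\Lambda(1:m,1:m)\), \(\Lambda_2=\Lambda(m+1:n,m+1:n)\) and \(G=X(m+1:n,:)X(1:m,:)^{-1}\). Then \(\mathrm{span}(\Lambda^kX)=\mathrm{span}\begin{bmatrix}I_m\\ G_k\end{bmatrix}\) with \(G_k=\Lambda_2^kG\Lambda_1^{-k}\), so \(G_k(p,q)=G(p,q)(\lambda_{m+p}/\lambda_q)^k\) and \(\|G_k\|=O((|\lambda_{m+1}|/|\lambda_m|)^k)\). I will not use an orthonormal basis. I parametrize each Ritz vector as \(z=\begin{bmatrix}a\\ G_ka\end{bmatrix}\) with \(a\in\mathbb{F}^m\), normalized by \(a_j=1\). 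The Galerkin condition \(\begin{bmatrix}I& G_k^*\end{bmatrix}(\Lambda-\theta I)z=0\) reads
\[
(\Lambda_1-\theta I)a+G_k^*(\Lambda_2-\theta I)G_ka=0 .
\]
The \(j\)-th row gives \(\theta=\lambda_j+e_j^*G_k^*(\Lambda_2-\theta)G_ka\). For \(i\neq j\), the \(i\)-th row gives \(a_i=-(\lambda_i-\theta)^{-1}e_i^*G_k^*(\Lambda_2-\theta)G_ka\). Together these form a fixed point problem \((\theta,a)=\Phi(\theta,a)\).

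\textbf{Step 1 (contraction).} Fix \(j\). On the set \(S=\{(\theta,a):a_j=1,\ |\theta-\lambda_j|\le C w_{jj},\ |a_i|\le C w_{ij}\ (i\ne j)\}\) I use the weighted sup-metric
\[
d=\frac{|\theta-\theta'|}{w_{jj}}+\max_{i\neq j}\frac{|a_i-a_i'|}{w_{ij}},\qquad w_{ij}=\frac{|\lambda_{m+1}|^{2k}}{|\lambda_i|^k|\lambda_j|^k}.
\]
This is a closed subset of a Banach space, so it is complete. The key computation is the entry expansion
\[
\bigl(G_k^*(\Lambda_2-\theta)G_k\bigr)(i,l)=\sum_p\overline{G(p,i)}G(p,l)(\lambda_{m+p}-\theta)\frac{\overline{\lambda_{m+p}^k}\lambda_{m+p}^k}{\overline{\lambda_i^k}\lambda_l^k},
\]
whose modulus is \(O(|\lambda_{m+1}|^{2k}/(|\lambda_i|^k|\lambda_l|^k))\). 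Two cases then follow. For \(l=j\) the term is \(O(w_{ij})\). For \(l\ne j\) it is multiplied by \(|a_l|\le Cw_{lj}\), giving \(O(w_{il}w_{lj})=o(w_{ij})\), since \(w_{lj}\to0\) and \(w_{il}w_{lj}=w_{ij}\,|\lambda_{m+1}|^{2k}/|\lambda_l|^{2k}\). Because \(|\lambda_i-\theta|\) stays bounded below, \(\Phi\) maps \(S\) into itself once \(C\) is large and \(k\) is large. The Lipschitz constant of \(\Phi\) in \(d\) is \(O((|\lambda_{m+1}|/|\lambda_m|)^{2k})\to0\). Banach's theorem then gives a unique fixed point for each \(j\), i.e. \(m\) Ritz pairs with \(\theta_j=\lambda_j+O(w_{jj})\). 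These have distinct moduli for large \(k\) and hence exhaust all \(m\) Ritz values, which proves the first bullet.

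\textbf{Step 2 (lower bounds).} At the fixed point I take the leading term. With \(\theta\approx\lambda_j\),
\[
a_i=-(\lambda_i-\lambda_j)^{-1}\sum_p\overline{G(p,i)}G(p,j)(\lambda_{m+p}-\lambda_j)\frac{|\lambda_{m+p}|^{2k}}{\overline{\lambda_i^k}\lambda_j^k}\,(1+o(1)),
\]
where the \(p=1\) term dominates because \(|\lambda_{m+1}|>|\lambda_{m+2}|\). Its coefficient is nonzero because \(G(1,i),G(1,j)\neq0\) by hypothesis and \(\lambda_{m+1}\neq\lambda_j\). This gives \(|a_i|\asymp w_{ij}\). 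For the rows below \(m\), \((G_ka)_p=G(p,j)(\lambda_{m+p}/\lambda_j)^k+\sum_{l\neq j}G_k(p,l)a_l\). The sum is \(o((|\lambda_{m+p}|/|\lambda_j|)^k)\), since \(|\lambda_l|^{-k}w_{lj}\ll|\lambda_j|^{-k}\). The hypothesis \(G(p,j)\neq0\) then gives the two-sided estimate.

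\textbf{Step 3 (normalization).} The unit Ritz vector is \(Z_k(:,j)=z/\|z\|\) up to a phase, so \(|Z_k(i,j)|=|z_i|/\|z\|\). We have
\[
\|z\|^2=1+\sum_{i\neq j}|a_i|^2+\|G_ka\|^2=1+\Theta\bigl(|\lambda_{m+1}|^{2k}/|\lambda_j|^{2k}\bigr),
\]
because the \(G_ka\) part, dominated by \(p=1\), has exactly this order with nonzero coefficient \(|G(1,j)|^2\), while \(|a_i|^2=O(w_{ij}^2)\) is smaller. Hence \(1-|Z_k(j,j)|=1-\|z\|^{-1}=\Theta(|\lambda_{m+1}|^{2k}/|\lambda_j|^{2k})\), and the other entries keep their orders since \(\|z\|=1+o(1)\). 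Taking the worst constants over the finitely many \(i,j\) yields \(M_6,M_7,k_7\).

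The main obstacle is Step 1: verifying the self-map and contraction estimates uniformly in the weighted metric. The difficulty is tracking the cross terms \(w_{il}w_{lj}\) and the \(\theta\)-dependence of \((\lambda_i-\theta)^{-1}\) and \(\Lambda_2-\theta\) against the weights \(w_{ij}\), which decay at different rates in different rows.
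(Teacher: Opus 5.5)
Your proposal is correct and follows essentially the same route as the paper. Both use the parametrization \(\begin{bmatrix}I_m\\ H\end{bmatrix}y\) with \(y(j)=1\), the same fixed-point map, the same metric weighted by \(\rho_{i,j}^k\), Banach's theorem, a lower bound from the dominant \(p=1\) term, and a final normalization. The only differences are cosmetic: you use a sum-type metric, which is equivalent to the paper's max-type metric, and you compute \(1-|Z_k(j,j)|\) as \(1-\|z\|^{-1}\) rather than via \(\sum_{q\neq j}|z_j(q)|^2\).
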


\begin{theorem}
Suppose \(\mathbb{F}=\mathbb{R}\) or \(\mathbb{C}\), \(n\), \(m\in\mathbb{N}_+\) with \(n>m\), and \(\Lambda=\mathrm{diag}(\lambda_1,\dots,\lambda_n)\in\mathbb{F}^{n\times n}\), with \(|\lambda_1|>\dots>|\lambda_n|>0\). Assume \(V\in\mathbb{F}^{n\times n}\) satisfies \(VV^*=V^*V=I_n\). Define \(A:=V\Lambda V^*\)
. Assume \(Y\in\mathbb{F}^{n\times m}\). Define \(X:=V^*Y\), and assume \(X(1:m,:)\) is invertible and all entries of \(X(m+1:n,:)X(1:m,:)^{-1}\) are nonzero.

Then there exists \(M_6=M_6(n,m,\Lambda,X)>0\), \(M_7=M_7(n,m,\Lambda,X)>0\), \(k_7=k_7(n,m,\Lambda,X)\in\mathbb{N}_+\) such that for every \(k\geq k_7\),

\begin{itemize}
\item The \(m\) Ritz values of \(A\) projected on \(\mathrm{span}(A^kY)\) have distinct modulus; denote them by \(\{\theta_j\}_{j\in[m]}\) where \(|\theta_1|>\dots>|\theta_m|\).
\item Let \(W_k(:,j)\) be a unit Ritz vector corresponding to \(\theta_j\). Then for every \(j\in[m]\), we have
\begin{align*}
M_7\dfrac{|\lambda_{m+1}|^{2k}}{|\lambda_i|^k|\lambda_j|^k}&\leq|V(:,i)^*W_k(:,j)|\leq M_6\dfrac{|\lambda_{m+1}|^{2k}}{|\lambda_i|^k|\lambda_j|^k},\quad i\in[m]\setminus\{j\};\\
M_7\dfrac{|\lambda_i|^k}{|\lambda_j|^k}&\leq|V(:,i)^*W_k(:,j)|\leq M_6\dfrac{|\lambda_i|^k}{|\lambda_j|^k},\quad i\in[n]\setminus[m];\\
M_7\dfrac{|\lambda_{m+1}|^{2k}}{|\lambda_j|^{2k}}&\leq1-|V(:,j)^*W_k(:,j)|\leq M_6\dfrac{|\lambda_{m+1}|^{2k}}{|\lambda_j|^{2k}}.
\end{align*}
\end{itemize}
\end{theorem}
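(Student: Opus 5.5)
The theorem is the unitarily transformed version of Lemma~\ref{lemma:withRR}, so I would deduce it from the lemma by a change of orthonormal basis, in the same way the theorem of Section~\ref{section:withoutRR} was obtained from Lemma~\ref{lemma:withoutRR}.

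\textbf{Step 1: transfer the subspaces.} Since \(A=V\Lambda V^*\) with \(V\) unitary, we have \(A^kY=V\Lambda^kV^*Y=V\Lambda^kX\). Hence
\[
\mathrm{span}(A^kY)=V\,\mathrm{span}(\Lambda^kX).
\]

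\textbf{Step 2: transfer the Ritz pairs.} Let \(\tilde Q\) be an orthonormal basis of \(\mathrm{span}(\Lambda^kX)\). Then \(V\tilde Q\) is an orthonormal basis of \(\mathrm{span}(A^kY)\), and
\[
(V\tilde Q)^*A(V\tilde Q)=\tilde Q^*\Lambda\tilde Q .
\]
So the two projected matrices coincide. It follows that the Ritz values of \(A\) on \(\mathrm{span}(A^kY)\) are exactly the Ritz values of \(\Lambda\) on \(\mathrm{span}(\Lambda^kX)\). The hypotheses on \(X=V^*Y\) are precisely those of Lemma~\ref{lemma:withRR}. The lemma therefore gives constants \(M_6,M_7,k_7\) and, for \(k\ge k_7\), distinct moduli \(|\theta_1|>\dots>|\theta_m|\). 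This is the first bullet.

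\textbf{Step 3: transfer the Ritz vectors.} A vector \(w\) is a unit Ritz vector of \(A\) for \(\theta_j\) iff \(w\in\mathrm{span}(A^kY)\), \(\|w\|=1\), and \(A w-\theta_j w\perp \mathrm{span}(A^kY)\). Setting \(z:=V^*w\), this is equivalent to \(z\in\mathrm{span}(\Lambda^kX)\), \(\|z\|=1\), and \(\Lambda z-\theta_j z\perp\mathrm{span}(\Lambda^kX)\), using that \(V^*\) preserves inner products. So \(Z_k(:,j):=V^*W_k(:,j)\) is a unit Ritz vector of \(\Lambda\) for \(\theta_j\).

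Because the \(\theta_j\) are distinct and the projected matrix is \(m\times m\), each eigenspace is one-dimensional. Hence the unit Ritz vector is unique up to a unimodular scalar, and the moduli \(|Z_k(i,j)|\) do not depend on that choice. Moreover,
\[
Z_k(i,j)=e_i^*V^*W_k(:,j)=V(:,i)^*W_k(:,j).
\]
Substituting this into the three two-sided bounds of Lemma~\ref{lemma:withRR} gives the second bullet.

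\textbf{Main obstacle.} The only delicate point is Step 3: the paper defines Ritz vectors via the particular algorithmic choice \(Z_k=Q_kU_k\), and one must check that the correspondence does not depend on the QR basis or eigen-decomposition chosen. The one-dimensionality of each Ritz eigenspace, together with the fact that only moduli are asserted, settles this. The real analytic difficulty lies in Lemma~\ref{lemma:withRR} itself, which we are allowed to assume.
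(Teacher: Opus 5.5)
Your proposal is correct and is the paper's own argument: the paper deduces this theorem from Lemma~\ref{lemma:withRR} in one line by a unitary change of basis, and your Steps 1--3 make that reduction explicit. This includes the point, also noted at the end of the lemma's proof, that a unit Ritz vector for a simple Ritz value is unique up to a unimodular factor, so the moduli \(|V(:,i)^*W_k(:,j)|\) are well defined. Your remark that the projected matrix is \(m\times m\) is justified because \(\Lambda^kX\) has full column rank, which follows from the invertibility of \(\Lambda\) and of \(X(1:m,:)\).
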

\begin{proof}
This follows by applying a change of orthonormal basis to the conclusion of the preceding lemma.
\end{proof}

We now prove Lemma~\ref{lemma:withRR}.

\subsection{Notation}

\begin{itemize}
\item \(\Lambda_1:=\mathrm{diag}(\lambda_1,\dots,\lambda_m)\), \(\Lambda_2:=\mathrm{diag}(\lambda_{m+1},\dots,\lambda_n)\).
\item \(\delta:=\min_{i\neq j}||\lambda_i|-|\lambda_j||>0\).
\item \(S=X(m+1:n,:)X(1:m,:)^{-1}\).
\item \(C:=\max\{1,|\lambda_{m+1}|\}\cdot\max_{p,q\in[m]}\{\sum_{r\in[n-m]}|S(r,p)S(r,q)|\}>0\).
\item \(M_0:=Cm(|\lambda_1|+2)\max\{1,2/\delta\}>0\).
\item \(k\in\mathbb{N}_+\).
\item \(H:=\Lambda_2^kS\Lambda_1^{-k}\), \(\Phi=\begin{bmatrix}
I_m\\
H
\end{bmatrix}\), \(E:=H^* H\), \(F:=H^*\Lambda_2H\).
\end{itemize}

\subsection{Auxiliary lemmas and propositions}

\begin{definition}
Set
\[
\rho_{p,q}=\dfrac{|\lambda_{m+1}|^2}{|\lambda_p||\lambda_q|},\quad p,q\in[m];\qquad\rho_{\max}=\dfrac{|\lambda_{m+1}|^2}{|\lambda_m|^2}<1.
\]
\end{definition}

\begin{proposition}
For \(p,q,r\in[m]\), we have
\[
\rho_{p,q}\rho_{q,r}\leq\rho_{\max}\rho_{p,r}.
\]
\end{proposition}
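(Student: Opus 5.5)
The inequality is a direct computation from the definition of \(\rho_{p,q}\). I would expand both sides and reduce the claim to a comparison of eigenvalue moduli.

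\emph{Left-hand side.} Substituting the definition gives
\[
\rho_{p,q}\rho_{q,r}=\frac{|\lambda_{m+1}|^2}{|\lambda_p||\lambda_q|}\cdot\frac{|\lambda_{m+1}|^2}{|\lambda_q||\lambda_r|}=\frac{|\lambda_{m+1}|^2}{|\lambda_q|^2}\cdot\frac{|\lambda_{m+1}|^2}{|\lambda_p||\lambda_r|}=\frac{|\lambda_{m+1}|^2}{|\lambda_q|^2}\,\rho_{p,r}.
\]
The middle factor \(|\lambda_q|\) appears twice, and the outer indices recombine into exactly \(\rho_{p,r}\).

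\emph{Reduction.} Since \(\rho_{p,r}>0\), the inequality is equivalent to
\[
\frac{|\lambda_{m+1}|^2}{|\lambda_q|^2}\le\frac{|\lambda_{m+1}|^2}{|\lambda_m|^2}=\rho_{\max},
\]
that is, to \(|\lambda_q|\ge|\lambda_m|\).

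\emph{Conclusion.} This holds for every \(q\in[m]\), because the moduli are strictly decreasing, \(|\lambda_1|>\dots>|\lambda_n|>0\). All quantities involved are positive, so no division issues arise. Equality holds exactly when \(q=m\).

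There is no real obstacle. The one point to note is that the bound uses only the middle index \(q\), and places no constraint on \(p\) or \(r\). This is why the proposition holds uniformly over all triples, which is what is needed later when products of \(\rho\)'s are chained in the contraction estimates.
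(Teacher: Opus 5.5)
Your proof is correct and is essentially identical to the paper's. Both regroup the product as \(\frac{|\lambda_{m+1}|^2}{|\lambda_q|^2}\rho_{p,r}\) and then bound the first factor by \(\rho_{\max}\), using \(|\lambda_q|\ge|\lambda_m|\) for \(q\in[m]\).
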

\begin{proof}
\[
\rho_{p,q}\rho_{q,r}=\dfrac{|\lambda_{m+1}|^2}{|\lambda_p||\lambda_q|}\dfrac{|\lambda_{m+1}|^2}{|\lambda_q||\lambda_r|}=\dfrac{|\lambda_{m+1}|^2}{|\lambda_q|^2}\dfrac{|\lambda_{m+1}|^2}{|\lambda_p||\lambda_r|}\leq\rho_{\max}\rho_{p,r}.
\]
\end{proof}

\begin{proposition}
\begin{enumerate}
\item \(H(p,q)=S(p,q)\dfrac{\lambda_{m+p}^k}{\lambda_{q}^k},\quad p\in[n-m],q\in[m]\).
\item \(E(p,q)=\sum_{r\in[n-m]}\overline{S(r,p)}S(r,q)\dfrac{\overline{\lambda_{m+r}}^k\lambda_{m+r}^k}{\overline{\lambda_p}^{k}\lambda_q^k},\quad p,q\in[m]\).
\item \(F(p,q)=\sum_{r\in[n-m]}\lambda_{m+r}\overline{S(r,p)}S(r,q)\dfrac{\overline{\lambda_{m+r}}^k\lambda_{m+r}^k}{\overline{\lambda_p}^{k}\lambda_q^k},\quad p,q\in[m]\).
\end{enumerate}
\end{proposition}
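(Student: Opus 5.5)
The statement to prove is the last proposition: the explicit entrywise formulas for \(H\), \(E=H^*H\) and \(F=H^*\Lambda_2H\). The plan is a direct entrywise computation from the definitions, using only that \(\Lambda_1\) and \(\Lambda_2\) are diagonal.

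For item 1, recall \(H=\Lambda_2^kS\Lambda_1^{-k}\). Left multiplication by the diagonal matrix \(\Lambda_2^k=\mathrm{diag}(\lambda_{m+1}^k,\dots,\lambda_n^k)\) scales row \(p\) of \(S\) by \(\lambda_{m+p}^k\). Right multiplication by \(\Lambda_1^{-k}=\mathrm{diag}(\lambda_1^{-k},\dots,\lambda_m^{-k})\) scales column \(q\) by \(\lambda_q^{-k}\). Hence \(H(p,q)=\lambda_{m+p}^kS(p,q)\lambda_q^{-k}\), which is the claimed formula. The inverse exists since \(|\lambda_q|>0\).

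For item 2, write
\[
E(p,q)=\sum_{r\in[n-m]}\overline{H(r,p)}\,H(r,q).
\]
Substituting item 1 gives \(\overline{H(r,p)}=\overline{S(r,p)}\,\overline{\lambda_{m+r}}^k/\overline{\lambda_p}^k\) and \(H(r,q)=S(r,q)\lambda_{m+r}^k/\lambda_q^k\). Multiplying these and summing over \(r\) yields the stated expression. The only care needed is conjugation: \(\overline{z^k}=\overline{z}^k\) and \(\overline{1/z}=1/\overline{z}\). When \(\mathbb{F}=\mathbb{R}\) the bars act trivially, so the formula covers both fields.

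For item 3, note that \(\Lambda_2\) is diagonal with \((r,r)\) entry \(\lambda_{m+r}\). Therefore
\[
F(p,q)=\sum_{r}\overline{H(r,p)}\,\lambda_{m+r}\,H(r,q).
\]
Inserting the same expressions as in item 2 produces the extra factor \(\lambda_{m+r}\) inside each summand, which gives the stated formula.

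There is no real obstacle here. The one point needing attention is the index bookkeeping: row \(r\) of \(H\) corresponds to the eigenvalue \(\lambda_{m+r}\), not \(\lambda_r\). The other is placing the conjugates on \(\lambda_p\) coming from \(H^*\), as opposed to \(\lambda_q\). Both follow immediately once one writes \(H^*(p,r)=\overline{H(r,p)}\).
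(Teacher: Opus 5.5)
Your proposal is correct and matches the paper's approach: the paper states this proposition without proof, treating it as the same direct entrywise expansion of \(H=\Lambda_2^kS\Lambda_1^{-k}\), \(E=H^*H\) and \(F=H^*\Lambda_2H\) using diagonality of \(\Lambda_1,\Lambda_2\). Your handling of the index shift \(\lambda_{m+r}\) and of the conjugates coming from \(H^*(p,r)=\overline{H(r,p)}\) is exactly what is needed.
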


\begin{proposition}
\(|E(p,q)|\leq C\rho_{p,q}^k\), \(|F(p,q)|\leq C\rho_{p,q}^k\).
\end{proposition}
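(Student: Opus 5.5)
We bound each entry directly from the explicit formulas of the preceding proposition and the triangle inequality. Fix \(p,q\in[m]\). Part 2 of the proposition gives
\[
|E(p,q)|\le\sum_{r\in[n-m]}|S(r,p)S(r,q)|\,\frac{|\lambda_{m+r}|^{2k}}{|\lambda_p|^k|\lambda_q|^k}.
\]
Since \(|\lambda_{m+r}|\le|\lambda_{m+1}|\) for every \(r\in[n-m]\), each factor \(|\lambda_{m+r}|^{2k}/(|\lambda_p|^k|\lambda_q|^k)\) is at most \(|\lambda_{m+1}|^{2k}/(|\lambda_p|^k|\lambda_q|^k)=\rho_{p,q}^k\). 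Hence
\[
|E(p,q)|\le\rho_{p,q}^k\sum_{r\in[n-m]}|S(r,p)S(r,q)|\le\rho_{p,q}^k\max_{p',q'\in[m]}\sum_{r\in[n-m]}|S(r,p')S(r,q')|\le C\rho_{p,q}^k,
\]
where the last step uses \(\max\{1,|\lambda_{m+1}|\}\ge1\) in the definition of \(C\).

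The bound for \(F\) is identical, except that part 3 of the proposition carries an extra factor \(\lambda_{m+r}\) in each summand. We bound it by \(|\lambda_{m+r}|\le|\lambda_{m+1}|\le\max\{1,|\lambda_{m+1}|\}\) and then proceed exactly as for \(E\). This is why \(C\) includes the factor \(\max\{1,|\lambda_{m+1}|\}\): a single constant then serves for both \(E\) and \(F\).

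There is no real obstacle; the only point to check is the monotonicity \(|\lambda_{m+r}|\le|\lambda_{m+1}|\), which follows from the ordering \(|\lambda_1|>\dots>|\lambda_n|\). Because \(C\) is defined as a maximum over all pairs \(p',q'\), the estimate holds uniformly in \(p,q\) and in \(k\).
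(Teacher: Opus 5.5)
Your proof is correct and follows the paper's argument essentially verbatim. Both apply the triangle inequality to the explicit entry formulas, bound \(|\lambda_{m+r}|\le|\lambda_{m+1}|\) to extract \(\rho_{p,q}^k\), and absorb the sum \(\sum_r|S(r,p)S(r,q)|\) into \(C\). For \(F\), the extra factor \(|\lambda_{m+1}|\) is likewise absorbed into \(C\) via \(\max\{1,|\lambda_{m+1}|\}\).
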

\begin{proof}
\[
|E(p,q)|\leq\sum_{r\in[n-m]}|S(r,p)S(r,q)|\dfrac{|\lambda_{m+r}|^{2k}}{|\lambda_p|^k|\lambda_q|^k}\leq\left(\sum_{r\in[n-m]}|S(r,p)S(r,q)|\right)\rho_{p,q}^k\leq C\rho_{p,q}^k.
\]
\[
|F(p,q)|\leq\sum_{r\in[n-m]}|\lambda_{m+r}S(r,p)S(r,q)|\dfrac{|\lambda_{m+r}|^{2k}}{|\lambda_p|^k|\lambda_q|^k}\leq\left(|\lambda_{m+1}|\sum_{r\in[n-m]}|S(r,p)S(r,q)|\right)\rho_{p,q}^k\leq C\rho_{p,q}^k.
\]
\end{proof}

\subsection{Construction of the metric space \((B,d)\) and the map \(T\)}

Fix \(j\in[m]\). Fix \(y(j)=1\), where \(y\in\mathbb{F}^m\), and let \(\hat{y}\in\mathbb{F}^{m-1}\) denote \(y\) with its \(j\)-th component removed. Consider the set
\[
B=\left\{(\theta,\hat{y})\in\mathbb{F}\times\mathbb{F}^{m-1}\middle|\dfrac{|\theta-\lambda_j|}{\rho_{j,j}^k}\leq M_0,\max_{q\neq j}\dfrac{|y(q)|}{\rho_{j,q}^k}\leq M_0\right\}.
\]

Define metric \(d\) on \(B\):
\[
d((\theta_a,\hat{y}_a),(\theta_b,\hat{y}_b)):=\max\left\{\dfrac{|\theta_a-\theta_b|}{\rho_{j,j}^k},\max_{q\neq j}\dfrac{|y_a(q)-y_b(q)|}{\rho_{j,q}^k}\right\}.
\]

\begin{proposition}
\(d\) is indeed a metric, and the metric space \((B,d)\) is complete.
\end{proposition}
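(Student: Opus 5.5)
The plan is to identify \((B,d)\) isometrically with a closed subset of \(\mathbb{F}^m\) carrying a weighted max-norm. Completeness then follows from completeness of \(\mathbb{F}^m\).

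\textbf{Metric axioms.} For fixed \(k\) and \(j\), the weights \(\rho_{j,j}^k\) and \(\rho_{j,q}^k\) (\(q\neq j\)) are fixed positive numbers, since all \(\lambda_i\neq 0\). Define on \(\mathbb{F}\times\mathbb{F}^{m-1}\cong\mathbb{F}^m\) the map
\[
\|(\theta,\hat{y})\|_w:=\max\left\{\dfrac{|\theta|}{\rho_{j,j}^k},\max_{q\neq j}\dfrac{|y(q)|}{\rho_{j,q}^k}\right\}.
\]
This is the \(\ell_\infty\) norm composed with the invertible diagonal scaling \(D=\mathrm{diag}(\rho_{j,j}^{-k},\rho_{j,q}^{-k})\), so it is a norm. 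Nonnegativity, definiteness (\(\|v\|_w=0\) iff every scaled coordinate vanishes), homogeneity and the triangle inequality all transfer coordinatewise from \(|\cdot|\) on \(\mathbb{F}\), and the triangle inequality survives taking a max of finitely many terms. Since \(d(a,b)=\|a-b\|_w\), \(d\) is the metric induced by a norm, and in particular it is a metric on the subset \(B\).

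\textbf{Completeness.} All norms on the finite-dimensional space \(\mathbb{F}^m\) (with \(\mathbb{F}=\mathbb{R}\) or \(\mathbb{C}\)) are equivalent. Concretely, \(\|v\|_w\) lies between \(c_1\|v\|_\infty\) and \(c_2\|v\|_\infty\), where \(c_1\) and \(c_2\) are the minimum and maximum of the weights \(\rho^{-k}\). Hence \((\mathbb{F}^m,\|\cdot\|_w)\) is complete, inheriting completeness from \(\mathbb{F}\) coordinatewise.

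It remains to show that \(B\) is closed. We have
\[
B=\{(\theta,\hat{y}):\|(\theta-\lambda_j,\hat{y})\|_w\leq M_0\},
\]
which is the closed ball of radius \(M_0\) about \((\lambda_j,0)\) in this norm. It is closed because the norm is continuous, being the composition of continuous maps: absolute values, scalings and a finite max. A closed subset of a complete metric space is complete. Explicitly, a \(d\)-Cauchy sequence in \(B\) is Cauchy in \(\mathbb{F}^m\), so it converges there; passing to the limit in the two defining inequalities of \(B\) shows the limit lies in \(B\), and \(d\)-convergence is equivalent to coordinatewise convergence.

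\textbf{Expected difficulty.} There is no real obstacle here. The only points needing care are that the weights are strictly positive, which uses \(|\lambda_i|>0\), and that \(B\) is nonempty: it contains \((\lambda_j,0)\), since \(M_0>0\).
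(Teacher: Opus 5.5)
Your proposal is correct, and it is essentially the argument the paper has in mind: the paper only says the proof is straightforward, and you supply that routine verification by identifying \(d\) with the metric induced by a weighted \(\ell_\infty\)-norm on \(\mathbb{F}^m\), with \(B\) the closed ball of radius \(M_0\) about \((\lambda_j,0)\). The key inputs are positivity of the weights \(\rho_{j,q}^k\), which needs \(\lambda_{m+1}\neq 0\) (guaranteed by \(n>m\) and \(|\lambda_n|>0\)), and completeness of \(\mathbb{F}^m\), and you handle both correctly.
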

\begin{proof}
The proof is straightforward.
\end{proof}

There exists \(k_1\) such that, for \(k\geq k_1\), \(M_0\rho_{\max}^k<\delta/2\).

For \(k\geq k_1\), define \(T:B\to\mathbb{F}\times\mathbb{F}^{m-1}\) by \((\theta,\hat{y})\mapsto(\theta',\hat{y}')\).
\begin{align*}
\theta'&=\lambda_j-\theta\sum_{q}E(j,q)y(q)+\sum_{q}F(j,q)y(q),\\
y'(p)&=\dfrac{1}{\lambda_p-\theta}\left[\theta\sum_{q}E(p,q)y(q)-\sum_{q}F(p,q)y(q)\right],\quad p\in[m]\setminus\{j\}.
\end{align*}

\subsection{\(T(B)\subseteq B\)}

There exists \(k_2\geq k_1\) such that, for \(k\geq k_2\), \(M_0\rho_{\max}^k<1\).

\begin{lemma}
If \(k\geq k_2\), then for every \(p\in[m]\), we have
\[
\left|\theta\sum_{q}E(p,q)y(q)-\sum_{q}F(p,q)y(q)\right|\leq(|\lambda_1|+2)mC\rho_{p,j}^k.
\]
\end{lemma}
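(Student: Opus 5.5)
The plan is to take \((\theta,\hat y)\in B\), with \(y(j)=1\), and bound each term in the sum over \(q\) separately, using the estimates on \(E\) and \(F\) together with the submultiplicativity of \(\rho\). Two preliminary bounds are needed.

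First, \(\theta\) is bounded. From the definition of \(B\),
\[
|\theta|\le|\lambda_j|+M_0\rho_{j,j}^k\le|\lambda_1|+M_0\rho_{\max}^k<|\lambda_1|+1
\]
for \(k\ge k_2\), since \(\rho_{j,j}\le\rho_{\max}\) for all \(j\in[m]\).

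Second, each component of \(y\) is bounded. For \(q=j\) we have \(|y(q)|=1=\rho_{j,j}^0\). For \(q\neq j\) we have \(|y(q)|\le M_0\rho_{j,q}^k\).

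Now fix \(p\in[m]\) and use \(|E(p,q)|,|F(p,q)|\le C\rho_{p,q}^k\). The summand with index \(q\) satisfies
\[
\bigl|\theta E(p,q)y(q)-F(p,q)y(q)\bigr|\le(|\theta|+1)\,C\rho_{p,q}^k\,|y(q)|\le(|\lambda_1|+2)\,C\rho_{p,q}^k|y(q)|.
\]
We bound \(\rho_{p,q}^k|y(q)|\) in two cases.

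\emph{Case \(q=j\).} The factor is exactly \(\rho_{p,j}^k\).

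\emph{Case \(q\neq j\).} The factor is at most \(M_0\rho_{p,q}^k\rho_{q,j}^k\), using \(\rho_{j,q}=\rho_{q,j}\). By the earlier proposition \(\rho_{p,q}\rho_{q,r}\le\rho_{\max}\rho_{p,r}\), this is at most \(M_0\rho_{\max}^k\rho_{p,j}^k\). Since \(k\ge k_2\) gives \(M_0\rho_{\max}^k<1\), the factor is at most \(\rho_{p,j}^k\).

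So every one of the \(m\) summands is at most \((|\lambda_1|+2)C\rho_{p,j}^k\). Summing over \(q\in[m]\) gives the claimed bound \((|\lambda_1|+2)mC\rho_{p,j}^k\).

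The main point is the off-diagonal case \(q\neq j\). There the factor \(M_0\) must be absorbed, and this is exactly what the submultiplicativity proposition and the choice of \(k_2\) accomplish.
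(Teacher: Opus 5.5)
Your proof is correct and follows the same route as the paper. Both bound \(|\theta|\le|\lambda_1|+1\) using membership in \(B\) and \(\rho_{j,j}\le\rho_{\max}\), apply \(|E(p,q)|,|F(p,q)|\le C\rho_{p,q}^k\), absorb \(M_0\) in the \(q\neq j\) terms via \(\rho_{p,q}\rho_{q,j}\le\rho_{\max}\rho_{p,j}\) and \(M_0\rho_{\max}^k<1\), and then sum the \(m\) terms.
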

\begin{proof}
\begin{align*}
&\left|\theta\sum_{q}E(p,q)y(q)-\sum_{q}F(p,q)y(q)\right|\\
&\leq|\theta|\sum_{q}|E(p,q)||y(q)|+\sum_{q}|F(p,q)||y(q)|\\
&\leq(|\lambda_j|+M_0\rho_{j,j}^k)\left(|E(p,j)|+\sum_{q\neq j}|E(p,q)||y(q)|\right)+|F(p,j)|+\sum_{q\neq j}|F(p,q)||y(q)|\\
&\leq(|\lambda_1|+2)\left(C\rho_{p,j}^k+\sum_{q\neq j}C\rho_{p,j}^kM_0\rho_{\max}^k\right)\\
&\leq(|\lambda_1|+2)mC\rho_{p,j}^k.
\end{align*}
\end{proof}

\begin{theorem}
If \(k\geq k_2\), then \(T(B)\subseteq B\).
\end{theorem}
\begin{proof}
\(\forall (\theta,\hat{y})\in B\). By the preceding lemma and the definition of \(M_0\),
\[
|\theta'-\lambda_j|=\left|-\theta\sum_{q}E(j,q)y(q)+\sum_{q}F(j,q)y(q)\right|\leq (|\lambda_1|+2)mC\rho_{j,j}^k\leq M_0\rho_{j,j}^k.
\]
For every \(p\in[m]\setminus\{j\}\),
\[
|\lambda_p-\theta|\geq|\lambda_p-\lambda_j|-|\lambda_j-\theta|\geq\delta-\delta/2=\delta/2,
\]
by the preceding lemma and the definition of \(M_0\),
\[
|y'(p)|=\dfrac{1}{|\lambda_p-\theta|}\left|\theta\sum_{q}E(p,q)y(q)-\sum_{q}F(p,q)y(q)\right|\leq\dfrac{2}{\delta}(|\lambda_1|+2)mC\rho_{p,j}^k\leq M_0\rho_{p,j}^k.
\]
Hence we have \(T(\theta,\hat{y})=(\theta',\hat{y}')\in B\). Therefore \(T(B)\subseteq B\).
\end{proof}

\subsection{\(T\) is a strict contraction}

\begin{lemma}
If \(k\geq k_2\), then for all \((\theta_a,\hat{y}_a)\), \((\theta_b,\hat{y}_b)\in B\) and every \(p\in[m]\), we have
\begin{align*}
&\left|\left[\theta_a\sum_{q}E(p,q)y_a(q)-\sum_{q}F(p,q)y_a(q)\right]-\left[\theta_b\sum_{q}E(p,q)y_b(q)-\sum_{q}F(p,q)y_b(q)\right]\right|\\
&\leq Cd(a,b)\rho_{p,j}^k\rho_{\max}^km(|\lambda_1|+3).
\end{align*}
\end{lemma}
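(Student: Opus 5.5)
We split the difference by adding and subtracting the mixed term \(\theta_b\sum_q E(p,q)y_a(q)\), which gives
\[
(\theta_a-\theta_b)\sum_{q}E(p,q)y_a(q)+\theta_b\sum_{q}E(p,q)\bigl(y_a(q)-y_b(q)\bigr)-\sum_{q}F(p,q)\bigl(y_a(q)-y_b(q)\bigr).
\]
We then bound the three terms separately. Throughout we use \(y_a(j)=y_b(j)=1\), so \(y_a(j)-y_b(j)=0\) and the last two sums only run over \(q\neq j\).

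\textbf{First term.} By the definition of \(d\) we have \(|\theta_a-\theta_b|\leq d(a,b)\rho_{j,j}^k\). Membership in \(B\) gives \(|y_a(q)|\leq M_0\rho_{j,q}^k\) for \(q\neq j\), and \(|y_a(j)|=1\). Using \(|E(p,q)|\leq C\rho_{p,q}^k\) and the proposition \(\rho_{p,q}\rho_{q,j}\leq\rho_{\max}\rho_{p,j}\), we get
\[
\sum_q|E(p,q)||y_a(q)|\leq C\rho_{p,j}^k+\sum_{q\neq j}C M_0\rho_{\max}^k\rho_{p,j}^k\leq mC\rho_{p,j}^k,
\]
where the last step uses \(M_0\rho_{\max}^k<1\) for \(k\geq k_2\). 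Multiplying by the factor \(\rho_{j,j}^k\) from \(|\theta_a-\theta_b|\), and using \(\rho_{j,j}\leq\rho_{\max}\) (since \(|\lambda_j|\geq|\lambda_m|\)), the first term is at most \(d(a,b)\,mC\rho_{p,j}^k\rho_{\max}^k\).

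\textbf{Second and third terms.} For \(q\neq j\) we have \(|y_a(q)-y_b(q)|\leq d(a,b)\rho_{j,q}^k\). Hence
\[
|E(p,q)||y_a(q)-y_b(q)|\leq C d(a,b)\rho_{p,q}^k\rho_{q,j}^k\leq Cd(a,b)\rho_{\max}^k\rho_{p,j}^k,
\]
and the same bound holds with \(F\) in place of \(E\). We also have \(|\theta_b|\leq|\lambda_j|+M_0\rho_{j,j}^k\leq|\lambda_1|+1\). Summing over the at most \(m-1\) indices \(q\neq j\), the second and third terms together are at most \(Cd(a,b)\rho_{\max}^k\rho_{p,j}^k(m-1)(|\lambda_1|+2)\).

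\textbf{Conclusion and main obstacle.} Adding the three bounds gives
\[
Cd(a,b)\rho_{p,j}^k\rho_{\max}^k\bigl[m+(m-1)(|\lambda_1|+2)\bigr]\leq Cd(a,b)\rho_{p,j}^k\rho_{\max}^k\,m(|\lambda_1|+3),
\]
which is the claim. The main obstacle is bookkeeping: we must ensure that every term carries the extra factor \(\rho_{\max}^k\). In the first term this factor comes from \(\rho_{j,j}^k\leq\rho_{\max}^k\). In the second and third terms it comes from the triangle-type proposition, which is available precisely because the \(q=j\) term drops out.
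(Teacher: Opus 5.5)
Your proof is correct and is essentially the paper's argument. You use the same add-and-subtract split of \(\theta_a y_a-\theta_b y_b\); you freeze \(\theta_b\) on the \(y\)-difference where the paper freezes \(\theta_a\), which changes nothing. You also rely on the same ingredients: \(|E(p,q)|,|F(p,q)|\le C\rho_{p,q}^k\), the inequality \(\rho_{p,q}\rho_{q,j}\le\rho_{\max}\rho_{p,j}\), \(\rho_{j,j}\le\rho_{\max}\), and \(M_0\rho_{\max}^k<1\). These lead to the same constant \(m+(m-1)(|\lambda_1|+2)\le m(|\lambda_1|+3)\).
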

\begin{proof}
\begin{align*}
&\left|\left[\theta_a\sum_{q}E(p,q)y_a(q)-\sum_{q}F(p,q)y_a(q)\right]-\left[\theta_b\sum_{q}E(p,q)y_b(q)-\sum_{q}F(p,q)y_b(q)\right]\right|\\
&\leq\left|\sum_{q}E(p,q)\theta_ay_a(q)-\sum_{q}E(p,q)\theta_by_b(q)\right|+\left|\sum_{q}F(p,q)(y_a(q)-y_b(q))\right|\\
&\leq\sum_{q}|E(p,q)||\theta_ay_a(q)-\theta_by_b(q)|+\sum_{q}|F(p,q)||y_a(q)-y_b(q)|\\
&=|E(p,j)||\theta_a-\theta_b|+\sum_{q\neq j}|E(p,q)||\theta_ay_a(q)-\theta_by_b(q)|+\sum_{q\neq j}|F(p,q)||y_a(q)-y_b(q)|\\
&\leq C\rho_{p,j}^k\rho_{j,j}^kd(a,b)+\sum_{q\neq j}C\rho_{p,q}^k(|\theta_a||y_a(q)-y_b(q)|+|\theta_a-\theta_b||y_b(q)|)+\sum_{q\neq j}C\rho_{p,q}^k\rho_{j,q}^kd(a,b)\\
&\leq mC\rho_{p,j}^k\rho_{\max}^kd(a,b)+\sum_{q\neq j}C\rho_{p,q}^k((|\lambda_1|+1)\rho_{j,q}^kd(a,b)+\rho_{j,j}^kd(a,b))\\
&\leq Cd(a,b)\rho_{p,j}^k\rho_{\max}^km(|\lambda_1|+3).
\end{align*}
\end{proof}

There exists \(k_3\geq k_2\) such that, for \(k\geq k_3\), \(C\rho_{\max}^km(|\lambda_1|+3)\leq1/2\).

\begin{proposition}
If \(k\geq k_3\), then for all \((\theta_a,\hat{y}_a)\), \((\theta_b,\hat{y}_b)\in B\), we have \(\dfrac{|\theta_a'-\theta_b'|}{\rho_{j,j}^k}\leq\dfrac{1}{2}d(a,b)\).
\end{proposition}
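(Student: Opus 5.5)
The plan is to write the difference \(\theta_a'-\theta_b'\) directly from the definition of \(T\) and then apply the preceding contraction lemma with \(p=j\). By the definition of \(T\),
\[
\theta_a'-\theta_b'=-\left(\left[\theta_a\sum_{q}E(j,q)y_a(q)-\sum_{q}F(j,q)y_a(q)\right]-\left[\theta_b\sum_{q}E(j,q)y_b(q)-\sum_{q}F(j,q)y_b(q)\right]\right),
\]
because the constant term \(\lambda_j\) cancels. The whole increment is therefore exactly the bracketed difference treated in the preceding lemma, with the particular choice \(p=j\).

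Since \(k\geq k_3\geq k_2\), that lemma applies to the two points \((\theta_a,\hat{y}_a),(\theta_b,\hat{y}_b)\in B\). It gives
\[
|\theta_a'-\theta_b'|\leq C\,d(a,b)\,\rho_{j,j}^k\,\rho_{\max}^k\,m(|\lambda_1|+3).
\]
Dividing by \(\rho_{j,j}^k>0\) yields
\[
\frac{|\theta_a'-\theta_b'|}{\rho_{j,j}^k}\leq C\rho_{\max}^k m(|\lambda_1|+3)\,d(a,b).
\]
The choice of \(k_3\) ensures \(C\rho_{\max}^k m(|\lambda_1|+3)\leq 1/2\), so the right-hand side is at most \(\tfrac12 d(a,b)\), which is the claim.

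There is no real obstacle here, since the estimate is a direct corollary of the preceding lemma once the weight \(\rho_{p,j}^k\) is matched to the normalization \(\rho_{j,j}^k\) in \(d\). The only point to check is that the lemma's hypotheses hold. They do: both points lie in \(B\) by assumption, and the lemma requires only \(k\geq k_2\), which follows from \(k\geq k_3\).
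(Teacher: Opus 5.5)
Your proposal is correct and follows the paper's proof. The paper likewise cancels \(\lambda_j\), applies the preceding lemma with \(p=j\) to bound \(|\theta_a'-\theta_b'|\) by \(Cd(a,b)\rho_{j,j}^k\rho_{\max}^k m(|\lambda_1|+3)\), and then uses the choice of \(k_3\) to obtain the factor \(1/2\).
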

\begin{proof}
\(\forall(\theta_a,\hat{y}_a)\), \((\theta_b,\hat{y}_b)\in B\). Set \((\theta_a',\hat{y}_a')=T(\theta_a,\hat{y}_a)\), \((\theta_b',\hat{y}_b')=T(\theta_b,\hat{y}_b)\). Then

\begin{align*}
&|\theta_a'-\theta_b'|\\
&=\left|\left(-\theta_a\sum_{q}E(j,q)y_a(q)+\sum_{q}F(j,q)y_a(q)\right)-\left(-\theta_b\sum_{q}E(j,q)y_b(q)+\sum_{q}F(j,q)y_b(q)\right)\right|\\
&\leq Cd(a,b)\rho_{j,j}^k\rho_{\max}^km(|\lambda_1|+3)\leq\dfrac{1}{2}\rho_{j,j}^kd(a,b),
\end{align*}

Hence
\[
\dfrac{|\theta_a'-\theta_b'|}{\rho_{j,j}^k}\leq\dfrac{1}{2}d(a,b).
\]
\end{proof}

There exists \(k_4\geq k_3\) such that, for \(k\geq k_4\), \(C\dfrac{8}{\delta^2}\rho_{\max}^km(|\lambda_1|+3)^2\leq1/2\).

\begin{proposition}
If \(k\geq k_4\), then for all \((\theta_a,\hat{y}_a)\), \((\theta_b,\hat{y}_b)\in B\) and every \(p\in[m]\setminus\{j\}\), we have \(\dfrac{|y_a'(p)-y_b'(p)|}{\rho_{p,j}^k}\leq\dfrac{1}{2}d(a,b)\).
\end{proposition}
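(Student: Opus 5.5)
We need to bound \(|y_a'(p)-y_b'(p)|\) where
\[
y'(p)=\frac{N(\theta,y)}{\lambda_p-\theta},\qquad N(\theta,y):=\theta\sum_q E(p,q)y(q)-\sum_q F(p,q)y(q).
\]
The plan is to split the difference by adding and subtracting \(N_b/(\lambda_p-\theta_a)\), where \(N_a:=N(\theta_a,y_a)\) and \(N_b:=N(\theta_b,y_b)\):
\[
y_a'(p)-y_b'(p)=\frac{N_a-N_b}{\lambda_p-\theta_a}+N_b\left(\frac{1}{\lambda_p-\theta_a}-\frac{1}{\lambda_p-\theta_b}\right)
=\frac{N_a-N_b}{\lambda_p-\theta_a}+\frac{N_b(\theta_a-\theta_b)}{(\lambda_p-\theta_a)(\lambda_p-\theta_b)}.
\]
As in the proof that \(T(B)\subseteq B\), for \(k\geq k_1\) and \(p\neq j\) we have \(|\lambda_p-\theta_a|,|\lambda_p-\theta_b|\geq\delta/2\), since \(|\theta-\lambda_j|\leq M_0\rho_{j,j}^k\leq M_0\rho_{\max}^k<\delta/2\).

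\textbf{First term.} The lemma on differences (the one giving \(Cd(a,b)\rho_{p,j}^k\rho_{\max}^km(|\lambda_1|+3)\)) bounds \(|N_a-N_b|\). Together with the lower bound \(\delta/2\) on the denominator, this gives
\[
\frac{|N_a-N_b|}{|\lambda_p-\theta_a|}\leq \frac{2}{\delta}\,C\,m(|\lambda_1|+3)\,\rho_{\max}^k\rho_{p,j}^k\,d(a,b).
\]

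\textbf{Second term.} The lemma preceding the self-map theorem gives \(|N_b|\leq(|\lambda_1|+2)mC\rho_{p,j}^k\). The definition of \(d\) gives \(|\theta_a-\theta_b|\leq\rho_{j,j}^k d(a,b)\leq\rho_{\max}^k d(a,b)\), using \(\rho_{j,j}\leq\rho_{\max}\). With both denominators at least \(\delta/2\), the term is at most
\[
\frac{4}{\delta^2}(|\lambda_1|+2)mC\,\rho_{\max}^k\rho_{p,j}^k\,d(a,b).
\]

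\textbf{Combining.} Dividing by \(\rho_{p,j}^k\), the ratio \(|y_a'(p)-y_b'(p)|/\rho_{p,j}^k\) is at most
\[
C m\rho_{\max}^k\left(\frac{2}{\delta}(|\lambda_1|+3)+\frac{4}{\delta^2}(|\lambda_1|+2)\right)d(a,b).
\]
This must be compared with the threshold defining \(k_4\), namely \(C\frac{8}{\delta^2}\rho_{\max}^km(|\lambda_1|+3)^2\leq 1/2\). The main obstacle is this bookkeeping, because \(\delta\) may be larger than 1, so \(2/\delta\) is not automatically dominated by \(4/\delta^2\). I expect the following to work:
\[
\frac{2}{\delta}(|\lambda_1|+3)\leq\frac{4}{\delta^2}(|\lambda_1|+3)^2,
\]
since this is equivalent to \(\delta\leq 2(|\lambda_1|+3)\), which holds because \(\delta\leq|\lambda_1|\). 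Also \((|\lambda_1|+2)\leq(|\lambda_1|+3)^2\). The sum is therefore at most \(\frac{8}{\delta^2}(|\lambda_1|+3)^2\), and the choice of \(k_4\) yields the bound \(\tfrac12 d(a,b)\). If that inequality chain failed, I would simply enlarge \(k_4\), which the statement permits only through its stated definition, so I would recheck the constants carefully at that point.
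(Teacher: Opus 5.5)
Your argument is correct and is essentially the paper's: the paper also splits the difference of quotients into a numerator-difference term, bounded by the difference lemma, and a denominator-difference term, bounded by the self-map lemma together with \(|\theta_a-\theta_b|\le\rho_{j,j}^k d(a,b)\), and it reaches the same threshold \(C\frac{8}{\delta^2}\rho_{\max}^k m(|\lambda_1|+3)^2\le\frac12\). The only cosmetic difference is that the paper keeps the common denominator \((\lambda_p-\theta_a)(\lambda_p-\theta_b)\) and bounds the leftover factor by \(|\lambda_p-\theta_a|\le 2|\lambda_1|+1\), whereas you cancel that factor and so need the extra check \(\delta\le 2(|\lambda_1|+3)\), which does hold because \(\delta<|\lambda_1|\).
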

\begin{align*}
&|y_a'(p)-y_b'(p)|\\
&=\left|\dfrac{1}{\lambda_p-\theta_a}\left[\theta_a\sum_{q}E(p,q)y_a(q)-\sum_{q}F(p,q)y_a(q)\right]-\dfrac{1}{\lambda_p-\theta_b}\left[\theta_b\sum_{q}E(p,q)y_b(q)-\sum_{q}F(p,q)y_b(q)\right]\right|\\
&\leq\dfrac{4}{\delta^2}\left|(\lambda_p-\theta_b)\left[\theta_a\sum_{q}E(p,q)y_a(q)-\sum_{q}F(p,q)y_a(q)\right]-(\lambda_p-\theta_a)\left[\theta_b\sum_{q}E(p,q)y_b(q)-\sum_{q}F(p,q)y_b(q)\right]\right|\\
&\leq\dfrac{4}{\delta^2}|\theta_a-\theta_b|\left|\theta_a\sum_{q}E(p,q)y_a(q)-\sum_{q}F(p,q)y_a(q)\right|\\
&\quad+\dfrac{4}{\delta^2}|\lambda_p-\theta_a|\left|\left[\theta_a\sum_{q}E(p,q)y_a(q)-\sum_{q}F(p,q)y_a(q)\right]-\left[\theta_b\sum_{q}E(p,q)y_b(q)-\sum_{q}F(p,q)y_b(q)\right]\right|\\
&\leq\dfrac{4}{\delta^2}\rho_{j,j}^kd(a,b)(|\lambda_1|+2)mC\rho_{p,j}^k+\dfrac{4}{\delta^2}(2|\lambda_1|+1)Cd(a,b)\rho_{p,j}^k\rho_{\max}^km(|\lambda_1|+3)\\
&\leq\rho_{p,j}^kd(a,b)C\dfrac{8}{\delta^2}\rho_{\max}^km(|\lambda_1|+3)^2\\
&\leq\dfrac{1}{2}\rho_{p,j}^kd(a,b).
\end{align*}

So
\[
\dfrac{|y_a'(p)-y_b'(p)|}{\rho_{p,j}^k}\leq\dfrac{1}{2}d(a,b).
\]

\begin{theorem}
If \(k\geq k_4\), then for all \((\theta_a,\hat{y}_a)\), \((\theta_b,\hat{y}_b)\in B\), we have
\(d(a',b')\leq\dfrac{1}{2}d(a,b)\).
\end{theorem}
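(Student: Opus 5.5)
This is a direct consequence of the two preceding propositions; it only remains to combine them via the definition of \(d\). Fix \(k\geq k_4\), and take \(a=(\theta_a,\hat{y}_a)\) and \(b=(\theta_b,\hat{y}_b)\) in \(B\). Write \(a'=T(a)=(\theta_a',\hat{y}_a')\) and \(b'=T(b)=(\theta_b',\hat{y}_b')\). Since \(T(B)\subseteq B\) for \(k\geq k_2\), both \(a'\) and \(b'\) lie in \(B\), so \(d(a',b')\) is defined. By the definition of the metric,
\[
d(a',b')=\max\left\{\dfrac{|\theta_a'-\theta_b'|}{\rho_{j,j}^k},\max_{q\neq j}\dfrac{|y_a'(q)-y_b'(q)|}{\rho_{j,q}^k}\right\}.
\]
It therefore suffices to bound each of the finitely many quantities inside the maximum by \(\tfrac12 d(a,b)\).

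The first quantity is handled by the proposition on \(\theta'\). That proposition holds for \(k\geq k_3\), and \(k_4\geq k_3\), so it gives \(|\theta_a'-\theta_b'|/\rho_{j,j}^k\leq\tfrac12 d(a,b)\). For the remaining quantities, fix \(q\in[m]\setminus\{j\}\). The proposition on \(y'\) gives \(|y_a'(q)-y_b'(q)|/\rho_{q,j}^k\leq\tfrac12 d(a,b)\). The only point to check is that its weight \(\rho_{q,j}^k\) agrees with the weight \(\rho_{j,q}^k\) used in \(d\). This holds because \(\rho_{p,q}=|\lambda_{m+1}|^2/(|\lambda_p||\lambda_q|)\) is symmetric in \(p\) and \(q\).

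Taking the maximum over all these bounds yields \(d(a',b')\leq\tfrac12 d(a,b)\), so \(T\) is a strict contraction on \(B\) with constant \(\tfrac12\).

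No real obstacle is expected here. All the analytic work was done in the two propositions, and this step only requires that the index conventions match, i.e.\ the symmetry \(\rho_{q,j}=\rho_{j,q}\), and that \(k\geq k_4\) implies every earlier threshold \(k\geq k_1,k_2,k_3\), which holds since \(k_4\geq k_3\geq k_2\geq k_1\).
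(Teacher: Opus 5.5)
Your proposal is correct and matches the paper's proof, which likewise writes out \(d(a',b')\) as a maximum and bounds each component using the two preceding propositions (valid for \(k\geq k_4\geq k_3\)). Your remark that \(\rho_{q,j}=\rho_{j,q}\) makes the weight in the \(y'\)-proposition agree with the weight in \(d\), a point the paper leaves implicit.
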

\begin{proof}
\[
d(a',b')=\max\left\{\dfrac{|\theta_a'-\theta_b'|}{\rho_{j,j}^k},\max_{q\neq j}\dfrac{|y_a'(q)-y_b'(q)|}{\rho_{j,q}^k}\right\}\leq\dfrac{1}{2}d(a,b).
\]
\end{proof}

\subsection{Application of the Banach fixed point theorem}

So far we have proved that if \(k\geq k_4\), then the following hold:
\begin{enumerate}
\item \((B,d)\) is a nonempty complete metric space;
\item \(T(B)\subseteq B\);
\item \(\forall a,b\in B\), \(d(T(a),T(b))\leq\dfrac{1}{2}d(a,b)\).
\end{enumerate}
By the Banach fixed point theorem, \(T\) has a unique fixed point in \(B\). Denote it by \((\theta_j,\hat{y}_j)\). By \(T(\theta_j,\hat{y}_j)=(\theta_j,\hat{y}_j)\), we know that\((\Lambda_1-\theta_jI_m)y_j=(\theta_jE-F)y_j\), which gives \((\Lambda_1+F)y_j=(I_m+E)y_j\theta_j\), and hence \(\Phi^*\Lambda\Phi y_j=\Phi^*\Phi y_j\theta_j\). Note that
\[
\Phi=\begin{bmatrix}
I_m\\
H
\end{bmatrix}=\begin{bmatrix}
I_m\\
\Lambda_2^kS\Lambda_1^{-k}
\end{bmatrix}=\begin{bmatrix}
\Lambda_1^kX(1:m,:)\\
\Lambda_2^kX(m+1:n,:)
\end{bmatrix}X(1:m,:)^{-1}\Lambda_1^{-k}=\Lambda^kXX(1:m,:)^{-1}\Lambda_1^{-k},
\]
thus, \((\theta_j,\Phi y_j)\) is a Ritz pair of \(\Lambda\) projected on \(\mathrm{span}(\Lambda^kX)\).

\((\theta_j,\hat{y}_j)\in B\) implies \(|\theta_j-\lambda_j|\leq M_0\rho_{\max}^k<\delta/2\). Therefore, for every \(j\in[m-1]\),
\[
|\theta_j|>|\lambda_j|-\delta/2=|\lambda_{j+1}|+|\lambda_j|-|\lambda_{j+1}|-\delta/2\geq |\lambda_{j+1}|+\delta/2>|\theta_{j+1}|.
\]
Therefore \(\{(\theta_j,\Phi y_j)\}_{j\in[m]}\) are exactly the \(m\) Ritz pairs of \(\Lambda\) projected on \(\mathrm{span}(\Lambda^kX)\), with \(|\theta_1|>\dots>|\theta_m|\).

Since \((\theta_j,\hat{y}_j)\in B\), we have the following proposition.

\begin{proposition}
(Upper bound for \(|y_j(q)|\)) If \(k\geq k_4\), then for every \(j\in[m]\) and every \(q\in[m]\setminus\{j\}\), we have
\[
|y_j(q)|\leq M_0\rho_{j,q}^k.
\]
\end{proposition}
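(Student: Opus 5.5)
This proposition is essentially a direct unpacking of the membership \((\theta_j,\hat{y}_j)\in B\) obtained from the Banach fixed point theorem. For \(k\geq k_4\), we have already shown that \(B\) is a nonempty complete metric space, that \(T(B)\subseteq B\), and that \(T\) is a \(\tfrac12\)-contraction. Hence the fixed point \((\theta_j,\hat{y}_j)\) lies in \(B\).

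I would simply recall the definition of \(B\), built for the fixed index \(j\). Its second constraint reads
\[
\max_{q\neq j}\frac{|y(q)|}{\rho_{j,q}^k}\leq M_0 .
\]
Applying this to \(y=y_j\), the vector obtained from \(\hat{y}_j\) by reinserting \(y_j(j)=1\), gives \(|y_j(q)|\leq M_0\rho_{j,q}^k\) for every \(q\in[m]\setminus\{j\}\).

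One point of bookkeeping needs care: the construction of \((B,d)\) and of \(T\) was carried out for a fixed \(j\in[m]\). The constants \(M_0\) and \(k_1,\dots,k_4\), however, depend only on \(n,m,\Lambda,X\) through \(C\), \(\delta\), \(|\lambda_1|\) and \(\rho_{\max}\), and not on \(j\). Therefore the same threshold \(k_4\) works simultaneously for all \(j\in[m]\), and the bound holds uniformly in \(j\) and \(q\).

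There is no genuine obstacle here; the only thing to verify is this uniformity of \(M_0\) and \(k_4\) in \(j\), which is immediate from their definitions.
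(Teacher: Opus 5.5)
Your proposal is correct and matches the paper's argument, which likewise reads the bound directly off the membership \((\theta_j,\hat{y}_j)\in B\) of the Banach fixed point. Your remark that \(M_0\) and \(k_1,\dots,k_4\) depend only on \(C\), \(\delta\), \(|\lambda_1|\), \(m\) and \(\rho_{\max}\), and not on \(j\), is accurate; it makes explicit the uniformity in \(j\) that the paper leaves implicit.
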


\subsection{Lower bound for \(|y_j(q)|\)}

There exists \(k_5\geq k_4\) such that, for \(k\geq k_5\), \(\sum_{r=2}^{n-m}(2|\lambda_1|+1)|S(r,p)S(r,j)|\dfrac{|\lambda_{m+2}|^{2k}}{|\lambda_{m+1}|^{2k}}\leq\dfrac{\delta}{4}|S(1,p)S(1,j)|\) and \((|\lambda_1|+2)mCM_0\rho_{\max}^k\leq\dfrac{\delta}{8}|S(1,p)S(1,j)|\) for all \(p,j\in[m]\).

Set \(M_1=\min_{p,j\in[m]}\left\{\dfrac{\delta|S(1,p)S(1,j)|}{(2|\lambda_1|+1)8}\right\}\).

\begin{proposition}
If \(k\geq k_5\), then for every \(j\in[m]\) and every \(p\in[m]\setminus\{j\}\), we have \(|y_j(p)|\geq M_1\rho_{p,j}^k\).
\end{proposition}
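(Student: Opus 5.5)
Since \(y_j\) is the fixed point of \(T\), for \(p\neq j\) we have
\[
y_j(p)=\dfrac{1}{\lambda_p-\theta_j}\left[\theta_j\sum_{q}E(p,q)y_j(q)-\sum_{q}F(p,q)y_j(q)\right].
\]
From \(|\theta_j-\lambda_j|<\delta/2\) we get \(|\lambda_p-\theta_j|\leq|\lambda_p|+|\theta_j|\leq 2|\lambda_1|+1\). So it suffices to bound the bracket from below by \(\frac{\delta}{8}|S(1,p)S(1,j)|\rho_{p,j}^k\); dividing by \(2|\lambda_1|+1\) then gives exactly \(M_1\rho_{p,j}^k\).

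To do this, I split the bracket into the \(q=j\) term, where \(y_j(j)=1\), and the terms with \(q\neq j\). The \(q=j\) term is
\[
\theta_jE(p,j)-F(p,j)=\sum_{r\in[n-m]}(\theta_j-\lambda_{m+r})\overline{S(r,p)}S(r,j)\dfrac{|\lambda_{m+r}|^{2k}}{\overline{\lambda_p}^k\lambda_j^k}.
\]
Its \(r=1\) summand has modulus
\[
|\theta_j-\lambda_{m+1}|\,|S(1,p)S(1,j)|\,\rho_{p,j}^k\geq\dfrac{\delta}{2}|S(1,p)S(1,j)|\rho_{p,j}^k,
\]
because \(|\theta_j-\lambda_{m+1}|\geq|\lambda_j|-|\lambda_{m+1}|-\delta/2\geq\delta/2\). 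Each summand with \(r\geq2\) has \(|\theta_j-\lambda_{m+r}|\leq2|\lambda_1|+1\) and carries the factor \(|\lambda_{m+r}|^{2k}\leq|\lambda_{m+2}|^{2k}=\rho_{p,j}^k|\lambda_{p}|^k|\lambda_j|^k(|\lambda_{m+2}|/|\lambda_{m+1}|)^{2k}/|\lambda_p|^k|\lambda_j|^k\cdot|\lambda_{m+1}|^{2k}\). By the first condition defining \(k_5\), the total of the \(r\geq2\) summands is at most \(\frac{\delta}{4}|S(1,p)S(1,j)|\rho_{p,j}^k\). Hence the \(q=j\) term is at least \(\frac{\delta}{4}|S(1,p)S(1,j)|\rho_{p,j}^k\) in modulus.

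For \(q\neq j\), I reuse the estimate from the proof of the lemma preceding \(T(B)\subseteq B\). It gives
\[
|\theta_j|\,|E(p,q)|\,|y_j(q)|+|F(p,q)|\,|y_j(q)|\leq(|\lambda_1|+2)C\rho_{p,q}^kM_0\rho_{j,q}^k.
\]
By the proposition \(\rho_{p,q}\rho_{q,j}\leq\rho_{\max}\rho_{p,j}\), this is at most \((|\lambda_1|+2)CM_0\rho_{\max}^k\rho_{p,j}^k\). Summing over at most \(m\) indices and applying the second condition defining \(k_5\), the \(q\neq j\) terms total at most \(\frac{\delta}{8}|S(1,p)S(1,j)|\rho_{p,j}^k\).

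The reverse triangle inequality then bounds the bracket below by \(\frac{\delta}{8}|S(1,p)S(1,j)|\rho_{p,j}^k\), which completes the argument. The main obstacle is the dominant term: isolating the \(r=1\) contribution requires \(|\lambda_{m+1}|>|\lambda_{m+2}|\) strictly, the lower bound \(|\theta_j-\lambda_{m+1}|\geq\delta/2\), and \(S(1,\cdot)\neq0\), which is guaranteed by the assumption that all entries of \(S\) are nonzero.
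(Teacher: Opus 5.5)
Your argument is the paper's proof: you isolate the $r=1$ summand of $\theta_jE(p,j)-F(p,j)$ using $|\theta_j-\lambda_{m+1}|\ge\delta/2$ and the first $k_5$ condition, absorb the $q\neq j$ terms via $\rho_{p,q}\rho_{q,j}\le\rho_{\max}\rho_{p,j}$ and the second $k_5$ condition, and then divide by $2|\lambda_1|+1$. Two small repairs are needed: the bound $|\theta_j|\le|\lambda_1|+1$ behind $|\lambda_p-\theta_j|\le 2|\lambda_1|+1$ comes from $|\theta_j-\lambda_j|\le M_0\rho_{\max}^k<1$ (valid since $k\ge k_2$), not from $|\theta_j-\lambda_j|<\delta/2$, because $\delta/2$ can exceed $1$; and your displayed identity for $|\lambda_{m+2}|^{2k}$ is garbled, since the fact you actually need is $|\lambda_{m+r}|^{2k}/(|\lambda_p|^k|\lambda_j|^k)\le\rho_{p,j}^k(|\lambda_{m+2}|/|\lambda_{m+1}|)^{2k}$ for $r\ge 2$.
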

\begin{proof}
We know that
\begin{align*}
&|\theta_jE(p,j)-F(p,j)|\\
&=\left|\sum_{r\in[n-m]}\theta_j\overline{S(r,p)}S(r,j)\dfrac{\overline{\lambda_{m+r}}^k\lambda_{m+r}^k}{\overline{\lambda_p}^k\lambda_j^k}-\sum_{r\in[n-m]}\lambda_{m+r}\overline{S(r,p)}S(r,j)\dfrac{\overline{\lambda_{m+r}}^k\lambda_{m+r}^k}{\overline{\lambda_p}^k\lambda_j^k}\right|\\
&\geq|\theta_j-\lambda_{m+1}||S(1,p)S(1,j)|\rho_{p,j}^k-\sum_{r=2}^{n-m}(2|\lambda_1|+1)|S(r,p)S(r,j)|\rho_{p,j}^k\dfrac{|\lambda_{m+2}|^{2k}}{|\lambda_{m+1}|^{2k}}\\
&\geq\dfrac{\delta}{2}|S(1,p)S(1,j)|\rho_{p,j}^k-\dfrac{\delta}{4}|S(1,p)S(1,j)|\rho_{p,j}^k\\
&=\dfrac{\delta}{4}|S(1,p)S(1,j)|\rho_{p,j}^k.
\end{align*}

Therefore

\begin{align*}
|y_j(p)|&=\dfrac{1}{|\lambda_p-\theta_j|}\left|\theta_j\sum_{q}E(p,q)y_j(q)-\sum_{q}F(p,q)y_j(q)\right|\\
&\geq\dfrac{1}{2|\lambda_1|+1}\left(|\theta_jE(p,j)-F(p,j)|-|\theta_j|\sum_{q\neq j}|E(p,q)||y_j(q)|-\sum_{q\neq j}|F(p,q)||y_j(q)|\right)\\
&\geq\dfrac{1}{2|\lambda_1|+1}\left(|\theta_jE(p,j)-F(p,j)|-(|\lambda_1|+2)\sum_{q\neq j}C\rho_{p,q}^kM_0\rho_{j,q}^k\right)\\
&\geq\dfrac{1}{2|\lambda_1|+1}\left(\dfrac{\delta}{4}|S(1,p)S(1,j)|\rho_{p,j}^k-(|\lambda_1|+2)mC\rho_{p,j}^kM_0\rho_{\max}^k\right)\\
&\geq\dfrac{\delta|S(1,p)S(1,j)|}{(2|\lambda_1|+1)8}\rho_{p,j}^k\\
&\geq M_1\rho_{p,j}^k.
\end{align*}
\end{proof}

\subsection{Returning to Ritz vectors}

\(\forall j\in[m]\). The \(j\)-th Ritz vector is
\[
\Phi y_j=\begin{bmatrix}
I_m\\
H
\end{bmatrix}y_j=\begin{bmatrix}
y_j\\
Hy_j
\end{bmatrix}
\]

For every \(q\in[m]\), we have
\[
(\Phi y_j)(q)=y_j(q).
\]

For every \(p\in[n-m]\), we have
\[
(\Phi y_j)(m+p)=(Hy_j)(p)=\sum_{q}H(p,q)y_j(q)=H(p,j)+\sum_{q\neq j}H(p,q)y_j(q).
\]

There exists \(k_6\geq k_5\) such that, for \(k\geq k_6\), \(\sum_{q\neq j}|S(p,q)|M_0\rho_{\max}^k\leq\dfrac{1}{2}|S(p,j)|\) for all \(p\in[n-m]\), \(j\in[m]\).

Set \(M_2=\dfrac{3}{2}\max_{p\in[n-m],j\in[m]}\{|S(p,j)|\}\) and \(M_3=\dfrac{1}{2}\min_{p\in[n-m],j\in[m]}\{|S(p,j)|\}\).

Set \(M_4=\max\{M_0,M_2\}\) and \(M_5=\min\{M_1,M_3\}\).

\begin{proposition}
If \(k\geq k_6\), then for every \(j\in[m]\), the following hold:
\begin{itemize}
\item \((\Phi y_j)(j)=1\);
\item \(M_5\dfrac{|\lambda_{m+1}|^{2k}}{|\lambda_j|^k|\lambda_q|^k}\leq|(\Phi y_j)(q)|\leq M_4\dfrac{|\lambda_{m+1}|^{2k}}{|\lambda_j|^k|\lambda_q|^k},\quad\forall q\in[m]\setminus\{j\}\);
\item \(M_5\dfrac{|\lambda_q|^k}{|\lambda_j|^k}\leq|(\Phi y_j)(q)|\leq M_4\dfrac{|\lambda_q|^k}{|\lambda_j|^k},\quad\forall q\in[n]\setminus[m]\).
\end{itemize}
\end{proposition}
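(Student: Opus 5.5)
The three items can be read off from the fixed point \((\theta_j,\hat y_j)\) produced by the Banach fixed point theorem, using the upper and lower bounds on \(|y_j(q)|\) already established. We take \(k\geq k_6\), so that all earlier propositions apply.

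The first item is immediate. By construction \(y_j(j)=1\), and the first \(m\) rows of \(\Phi\) are \(I_m\), so \((\Phi y_j)(j)=y_j(j)=1\).

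For the second item, fix \(q\in[m]\setminus\{j\}\), so that \((\Phi y_j)(q)=y_j(q)\). The Proposition giving the upper bound for \(|y_j(q)|\) yields \(|y_j(q)|\leq M_0\rho_{j,q}^k\le M_4\rho_{j,q}^k\). The Proposition giving the lower bound, applied with \(p=q\), yields \(|y_j(q)|\geq M_1\rho_{q,j}^k\geq M_5\rho_{j,q}^k\), where we use the symmetry \(\rho_{q,j}=\rho_{j,q}=|\lambda_{m+1}|^2/(|\lambda_j||\lambda_q|)\). This is exactly the claimed two-sided bound.

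For the third item, write \(q=m+p\) with \(p\in[n-m]\). We have \((\Phi y_j)(m+p)=H(p,j)+\sum_{q'\neq j}H(p,q')y_j(q')\), and the plan is to show that the leading term \(H(p,j)\) dominates. Its modulus is \(|H(p,j)|=|S(p,j)|\,|\lambda_{m+p}|^k/|\lambda_j|^k\). For each correction term, combine \(|H(p,q')|=|S(p,q')||\lambda_{m+p}|^k/|\lambda_{q'}|^k\) with \(|y_j(q')|\le M_0|\lambda_{m+1}|^{2k}/(|\lambda_j|^k|\lambda_{q'}|^k)\). This gives
\[
|H(p,q')y_j(q')|\le |S(p,q')|M_0\frac{|\lambda_{m+p}|^k}{|\lambda_j|^k}\cdot\frac{|\lambda_{m+1}|^{2k}}{|\lambda_{q'}|^{2k}}\le |S(p,q')|M_0\rho_{\max}^k\frac{|\lambda_{m+p}|^k}{|\lambda_j|^k},
\]
where the last step uses \(|\lambda_{q'}|\geq|\lambda_m|\) for \(q'\in[m]\). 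By the choice of \(k_6\), \(\sum_{q'\neq j}|S(p,q')|M_0\rho_{\max}^k\le \tfrac12|S(p,j)|\), so the correction has modulus at most half the leading term. The reverse and forward triangle inequalities then give
\[
\tfrac12|S(p,j)|\frac{|\lambda_{m+p}|^k}{|\lambda_j|^k}\le|(\Phi y_j)(m+p)|\le\tfrac32|S(p,j)|\frac{|\lambda_{m+p}|^k}{|\lambda_j|^k}.
\]
Since \(\tfrac12|S(p,j)|\ge M_3\ge M_5\) and \(\tfrac32|S(p,j)|\le M_2\le M_4\), this is the third item. The hypothesis that all entries of \(S\) are nonzero guarantees \(M_3>0\), so the lower bound is nontrivial.

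No step here is a real obstacle; the work was done in the earlier propositions. The only point needing care is the bound \(|\lambda_{m+1}|^{2k}/|\lambda_{q'}|^{2k}\le\rho_{\max}^k\), which is what makes the choice of \(k_6\) absorb all the correction terms.
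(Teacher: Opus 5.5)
Your proof is correct and follows the paper's own argument. For the third item, you show that the leading term \(H(p,j)\) dominates the correction \(\sum_{q'\neq j}H(p,q')y_j(q')\), using \(|\lambda_{m+1}|^{2k}/|\lambda_{q'}|^{2k}\le\rho_{\max}^k\) and the choice of \(k_6\); the first two items follow directly from \(\Phi(1:m,:)=I_m\) and the earlier upper and lower bounds on \(|y_j(q)|\), which the paper leaves implicit when it says ``it suffices to prove the case \(q\in[n]\setminus[m]\)''.
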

\begin{proof}
It suffices to prove the case \(q\in[n]\setminus[m]\).

\(\forall p\in[n-m]\),
\begin{align*}
&|(\Phi y_j)(m+p)|\leq|H(p,j)|+\sum_{q\neq j}|H(p,q)||y_j(q)|\\
&\leq |S(p,j)|\dfrac{|\lambda_{m+p}|^k}{|\lambda_j|^k}+\sum_{q\neq j}|S(p,q)|\dfrac{|\lambda_{m+p}|^k}{|\lambda_j|^k}M_0\rho_{\max}^k\\
&\leq\dfrac{3}{2}|S(p,j)|\dfrac{|\lambda_{m+p}|^k}{|\lambda_j|^k}\leq M_2\dfrac{|\lambda_{m+p}|^k}{|\lambda_j|^k}\leq M_4\dfrac{|\lambda_{m+p}|^k}{|\lambda_j|^k}.
\end{align*}
Similarly we have
\[
|(\Phi y_j)(m+p)|\geq|H(p,j)|-\sum_{q\neq j}|H(p,q)||y_j(q)|\geq M_5\dfrac{|\lambda_{m+p}|^k}{|\lambda_j|^k}.
\]
\end{proof}

\subsection{Normalization in the 2-norm}

The true Ritz pairs are given by \(\left\{\left(\theta_j,\dfrac{\Phi y_j}{\|\Phi y_j\|_2}\right)\right\}_{j\in[m]}\). Define \(z_j=\dfrac{\Phi y_j}{\|\Phi y_j\|_2}\).

There exists \(k_7\geq k_6\) such that, for \(k\geq k_7\), \(nM_4^2\dfrac{|\lambda_{m+1}|^{2k}}{|\lambda_m|^{2k}}\leq 3\). Set \(M_6=\max\left\{M_4,nM_4^2\right\}\) and \(M_7=\min\left\{\dfrac{1}{2}M_5,\dfrac{1}{8}M_5^2\right\}\).

\begin{proposition}
Suppose \(k\geq k_7\). For every \(j\in[m]\), we have
\begin{align*}
M_7\dfrac{|\lambda_{m+1}|^{2k}}{|\lambda_j|^k|\lambda_q|^k}&\leq|z_j(q)|\leq M_6\dfrac{|\lambda_{m+1}|^{2k}}{|\lambda_j|^k|\lambda_q|^k},\quad\forall q\in[m]\setminus\{j\};\\
M_7\dfrac{|\lambda_q|^k}{|\lambda_j|^k}&\leq|z_j(q)|\leq M_6\dfrac{|\lambda_q|^k}{|\lambda_j|^k},\quad\forall q\in[n]\setminus[m];\\
M_7\dfrac{|\lambda_{m+1}|^{2k}}{|\lambda_j|^{2k}}&\leq1-|z_j(j)|\leq M_6\dfrac{|\lambda_{m+1}|^{2k}}{|\lambda_j|^{2k}}.
\end{align*}
\end{proposition}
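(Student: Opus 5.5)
The proof reduces to bounding $\|\Phi y_j\|_2$ above and below, and then transferring the entrywise bounds of the preceding proposition through the normalization $z_j=\Phi y_j/\|\Phi y_j\|_2$. Write $N:=\|\Phi y_j\|_2^2$. Since $(\Phi y_j)(j)=1$, we have
\[
N=1+\sum_{q\ne j}|(\Phi y_j)(q)|^2 .
\]
The sum runs over indices $q\in[m]\setminus\{j\}$ and $q\in[n]\setminus[m]$. The key observation is that every off-diagonal term is bounded above by $M_4^2(|\lambda_{m+1}|/|\lambda_j|)^{2k}$.

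\textbf{Bounding $N$.} For $q\in[n]\setminus[m]$ this bound is immediate, since $|\lambda_q|\le|\lambda_{m+1}|$. For $q\in[m]\setminus\{j\}$ we have $\rho_{j,q}^{2k}\le\rho_{j,j}^{k}\rho_{\max}^{k}\le\rho_{j,j}^k$. Summing over at most $n$ terms gives
\[
1\le N\le 1+nM_4^2\,\rho_{j,j}^k .
\]
This quantity is at most $4$ for $k\ge k_7$. For the lower bound we keep only the term $q=m+1$, which is at least $M_5^2\rho_{j,j}^k$ by the proposition. Hence
\[
N-1\ge M_5^2\rho_{j,j}^k .
\]

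\textbf{Off-diagonal entries.} For $q\ne j$ we have $|z_j(q)|=|(\Phi y_j)(q)|/\sqrt N$. The upper bound follows from $\sqrt N\ge1$ together with $M_4\le M_6$. The lower bound follows from $\sqrt N\le 2$, giving the factor $\tfrac12M_5\ge M_7$. This covers both the $i\in[m]\setminus\{j\}$ and $i\in[n]\setminus[m]$ cases of the statement (with $i=q$).

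\textbf{Diagonal entry.} Here $|z_j(j)|=N^{-1/2}$, so
\[
1-|z_j(j)|=\frac{\sqrt N-1}{\sqrt N}=\frac{N-1}{\sqrt N(\sqrt N+1)} .
\]
For the upper bound, the denominator is at least $2$, so $1-|z_j(j)|\le \tfrac12(N-1)\le nM_4^2\rho_{j,j}^k\le M_6\rho_{j,j}^k$. For the lower bound, $\sqrt N\le2$ makes the denominator at most $6$, so $1-|z_j(j)|\ge \tfrac16M_5^2\rho_{j,j}^k\ge\tfrac18M_5^2\rho_{j,j}^k\ge M_7\rho_{j,j}^k$. Since $\rho_{j,j}^k=|\lambda_{m+1}|^{2k}/|\lambda_j|^{2k}$, these are exactly the claimed bounds.

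The only potentially delicate point is the off-diagonal estimate $\rho_{j,q}^{2k}\le\rho_{j,j}^k$ for $q\in[m]$. It holds because $\rho_{j,q}^2=\rho_{j,j}\rho_{q,q}$ and $\rho_{q,q}<1$ (a special case of the earlier proposition $\rho_{p,q}\rho_{q,r}\le\rho_{\max}\rho_{p,r}$). Everything else is bookkeeping with the constants fixed in the choice of $k_7$.
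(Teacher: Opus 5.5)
Your proof is correct and follows the paper's route: bound $\|\Phi y_j\|_2$ between $1$ and $2$ via the $k_7$ condition, divide the entrywise bounds of the preceding proposition through by it, and obtain the diagonal lower bound from the single term $q=m+1$. The only difference is cosmetic. The paper handles $1-|z_j(j)|$ via $1-x\le\sqrt{1-x}\le 1-x/2$ with $x=\sum_{q\ne j}|z_j(q)|^2$, whereas you use the exact identity $1-N^{-1/2}=(N-1)/(\sqrt N(\sqrt N+1))$; you also make explicit the estimate $\rho_{j,q}^{2k}\le\rho_{j,j}^k$ that the paper uses tacitly.
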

\begin{proof}
\(\forall j\in[m]\).
\begin{enumerate}
\item For every \(q\in[m]\setminus\{j\}\), \(|(\Phi y_j)(q)|\leq M_4\dfrac{|\lambda_{m+1}|^{2k}}{|\lambda_j|^k|\lambda_q|^k}\leq M_4\dfrac{|\lambda_{m+1}|^k}{|\lambda_m|^k}\),
\item For every \(q\in[n]\setminus[m]\), \(|(\Phi y_j)(q)|\leq M_4\dfrac{|\lambda_q|^k}{|\lambda_j|^k}\leq M_4\dfrac{|\lambda_{m+1}|^k}{|\lambda_m|^k}\).
\end{enumerate}

Since \(k\geq k_7\),
\[
1=|(\Phi y_j)(j)|^2\leq\|\Phi y_j\|_2^2\leq 1+nM_4^2\dfrac{|\lambda_{m+1}|^{2k}}{|\lambda_m|^{2k}}\leq 4.
\]
Therefore \(1\leq\|\Phi y_j\|_2\leq 2\).

Thus, for \(q\in[m]\setminus\{j\}\),
\[
M_7\dfrac{|\lambda_{m+1}|^{2k}}{|\lambda_j|^k|\lambda_q|^k}\leq\dfrac{1}{2}M_5\dfrac{|\lambda_{m+1}|^{2k}}{|\lambda_j|^k|\lambda_q|^k}\leq|z_j(q)|\equiv\dfrac{|(\Phi y_j)(q)|}{\|\Phi y_j\|_2}\leq M_4\dfrac{|\lambda_{m+1}|^{2k}}{|\lambda_j|^k|\lambda_q|^k}\leq M_6\dfrac{|\lambda_{m+1}|^{2k}}{|\lambda_j|^k|\lambda_q|^k}.
\]
for \(q\in[n]\setminus[m]\),
\[
M_7\dfrac{|\lambda_q|^k}{|\lambda_j|^k}\leq\dfrac{1}{2}M_5\dfrac{|\lambda_q|^k}{|\lambda_j|^k}\leq|z_j(q)|\equiv\dfrac{|(\Phi y_j)(q)|}{\|\Phi y_j\|_2}\leq M_4\dfrac{|\lambda_q|^k}{|\lambda_j|^k}\leq M_6\dfrac{|\lambda_q|^k}{|\lambda_j|^k}.
\]
for \(q=j\), using \(1-x\leq\sqrt{1-x}\leq1-x/2\) for \(x\in[0,1]\), we have
\[
1-\sum_{q\in[n],q\neq j}|z_j(q)|^2\leq|z_j(j)|\equiv\sqrt{1-\sum_{q\in[n],q\neq j}|z_j(q)|^2}\leq 1-\dfrac{1}{2}\sum_{q\in[n],q\neq j}|z_j(q)|^2,
\]
and hence
\begin{align*}
&M_7\dfrac{|\lambda_{m+1}|^{2k}}{|\lambda_j|^{2k}}\leq\dfrac{1}{8}M_5^2\dfrac{|\lambda_{m+1}|^{2k}}{|\lambda_j|^{2k}}\leq\dfrac{1}{2}\sum_{q\in[n],q\neq j}|z_j(q)|^2\leq1-|z_j(j)|\\
&\leq\sum_{q\in[n],q\neq j}|z_j(q)|^2\leq nM_4^2\dfrac{|\lambda_{m+1}|^{2k}}{|\lambda_j|^{2k}}\leq M_6\dfrac{|\lambda_{m+1}|^{2k}}{|\lambda_j|^{2k}}.
\end{align*}
\end{proof}

All unit Ritz vectors corresponding to \(\theta_j\) are of the form \(z_je^{\imath\alpha}\) for \(\alpha\in\mathbb{R}\). It is clear that \(|(z_je^{\imath\alpha})(q)|\) is independent of \(\alpha\). This completes the proof of Lemma~\ref{lemma:withRR}.

\section{Conclusion}\label{section:conclusion}

In this paper, we have studied the element-wise convergence behavior of subspace iteration for a unitarily diagonalizable matrix
\[
A = V \Lambda V^*, \qquad \Lambda = \operatorname{diag}(\lambda_1,\dots,\lambda_n),
\]
with \(|\lambda_1| > \cdots > |\lambda_n| > 0\). All results are established for both \(\mathbb{F} = \mathbb{R}\) and \(\mathbb{F} = \mathbb{C}\). All element-wise statements below are understood in the eigenvector coordinate system associated with \(V\), that is, we consider the entries of \(V^*\tilde Q_k\) and \(V^*W_k\), rather than the entries of \(\tilde Q_k\) and \(W_k\) in the standard basis.

For subspace iteration without the Rayleigh--Ritz procedure, let \(\tilde Q_k \tilde R_k = A^k Y\) be the QR factorization. Under mild assumptions of \(Y\), the entries of \(V^*\tilde Q_k\) satisfy
\[
|V(:,i)^*\tilde Q_k(:,j)|=\Theta \left(\frac{|\lambda_{\max\{i,j\}}|^k}{|\lambda_{\min\{i,j\}}|^k}\right), \quad (i,j)\in[n]\times [m],\quad i\neq j,
\]
\[
1-|V(:,j)^*\tilde Q_k(:,j)|=\Theta\left( \max\left\{\frac{|\lambda_j|}{|\lambda_{j-1}|}, \frac{|\lambda_{j+1}|}{|\lambda_j|}\right\}^{2k}\right),\quad j\in[m].
\]

For subspace iteration with the Rayleigh--Ritz procedure, let \(W_k\) denote the Ritz vectors of \(A\) projected on \(\mathrm{span}(A^kY)\), with Ritz values sorted in the descending order by magnitude. Under mild assumptions of \(Y\), the entries of \(V^*W_k\) satisfy
\[
|V(:,i)^*W_k(:,j)|=\Theta \left(\frac{|\lambda_i|^k}{|\lambda_j|^k}\right),\quad (i,j)\in([n]\setminus[m])\times[m],
\]
\[
|V(:,i)^*W_k(:,j)|=\Theta \left(\frac{|\lambda_{m+1}|^{2k}}{|\lambda_i|^k|\lambda_j|^k}\right), \quad (i,j)\in[m]\times[m],\quad i\neq j,
\]
and
\[
1-|V(:,j)^*W_k(:,j)|=\Theta \left(\frac{|\lambda_{m+1}|^{2k}}{|\lambda_j|^{2k}}\right), \quad j\in[m].
\]
These results provide an element-wise description of the convergence of subspace iteration, both without and with the Rayleigh--Ritz procedure, in the eigenvector coordinate system.

A natural direction for future work is to extend the present element-wise theory from unitarily diagonalizable matrices to unitarily upper triangularizable matrices, and this generalization is left for future research.

\bibliographystyle{plain}
\bibliography{references}

@book {saad_book,
    AUTHOR = {Saad, Yousef},
     TITLE = {Numerical methods for large eigenvalue problems},
    SERIES = {Classics in Applied Mathematics},
    VOLUME = {66},
   EDITION = {Revised},
 PUBLISHER = {Society for Industrial and Applied Mathematics (SIAM),
              Philadelphia, PA},
      YEAR = {2011},
     PAGES = {xvi+276},
      ISBN = {978-1-611970-72-2},
   MRCLASS = {65F15 (01A75 65-02 65F50)},
  MRNUMBER = {3396212},
       DOI = {10.1137/1.9781611970739.ch1},
       URL = {https://doi.org/10.1137/1.9781611970739.ch1},
}

@book {golub_book,
    AUTHOR = {Golub, Gene H. and Van Loan, Charles F.},
     TITLE = {Matrix computations},
    SERIES = {Johns Hopkins Studies in the Mathematical Sciences},
   EDITION = {Fourth},
 PUBLISHER = {Johns Hopkins University Press, Baltimore, MD},
      YEAR = {2013},
     PAGES = {xiv+756},
      ISBN = {978-1-4214-0794-4; 1-4214-0794-9; 978-1-4214-0859-0},
   MRCLASS = {65-02 (65Fxx)},
  MRNUMBER = {3024913},
MRREVIEWER = {J\"org\ Liesen},
}

@book {parlett_book,
    AUTHOR = {Parlett, Beresford N.},
     TITLE = {The symmetric eigenvalue problem},
    SERIES = {Classics in Applied Mathematics},
    VOLUME = {20},
      NOTE = {Corrected reprint of the 1980 original},
 PUBLISHER = {Society for Industrial and Applied Mathematics (SIAM),
              Philadelphia, PA},
      YEAR = {1998},
     PAGES = {xxiv+398},
      ISBN = {0-89871-402-8},
   MRCLASS = {65F15 (15A18)},
  MRNUMBER = {1490034},
MRREVIEWER = {F.\ Szidarovszky},
       DOI = {10.1137/1.9781611971163},
       URL = {https://doi.org/10.1137/1.9781611971163},
}

@inproceedings{NEURIPS2020_3d8e03e8,
 author = {Charisopoulos, Vasileios and Benson, Austin R and Damle, Anil},
 booktitle = {Advances in Neural Information Processing Systems},
 editor = {H. Larochelle and M. Ranzato and R. Hadsell and M.F. Balcan and H. Lin},
 pages = {5644--5655},
 publisher = {Curran Associates, Inc.},
 title = {Entrywise convergence of iterative methods for eigenproblems},
 url = {https://proceedings.neurips.cc/paper_files/paper/2020/file/3d8e03e8b133b16f13a586f0c01b6866-Paper.pdf},
 volume = {33},
 year = {2020}
}

\end{document}